\newcommand{\cR}{\mathcal{R}}
\newcommand{\cS}{\mathcal{S}}
\newcommand{\cT}{\mathcal{T}}
\newcommand{\cV}{\mathcal{V}}
\newcommand{\cW}{\mathcal{W}}
\theoremstyle{plain}
\newtheorem{theorem}{Theorem}[section]
\newtheorem{lemma}[theorem]{Lemma}
\theoremstyle{definition}
\theoremstyle{remark}
\renewcommand{\section}{%
  \@startsection{section}{1}%
    \z@{.7\linespacing\@plus.7\linespacing}{.5\linespacing}%
    {\centering\normalfont\Large\bfseries}}
\def\subsection{\@startsection{subsection}{2}%
  \z@{.7\linespacing\@plus.7\linespacing}{.5\linespacing}%
  {\normalfont\normalsize\bfseries}} % 제목 글씨는 굵게
\begin{document}

% 문서 정보
\title[Undecidability of translational tiling with 2 polycubes]
{Undecidability of translational tiling with \\ 2 polycubes}

\author{Yoonhu Kim}
\address{\parbox[t]{\textwidth} {Yoonhu Kim \\ Gyeonggi Science High School \\ Gyeonggi-do 16297, Korea} }
\email{rumstis@gmail.com}

\subjclass[2020]{Primary 52C22, 68Q17}
\keywords{tiling, translation, polycube, undecidability}

\begin{abstract}
In this paper, we prove that it is undecidable whether a set of two polycubes can tile $\mathbb{Z}^3$ by translation.
The proof involves a new technique that allows us to simulate two disconnected polycubes with two connected polycubes.
By expanding this technique to higher dimensions, we also prove that a set of disconnected tiles in $\mathbb{Z}^n$
can be simulated by the same number of connected tiles in $\mathbb{Z}^n$ for $n \geq 3$.

\end{abstract}

\maketitle

\section{Introduction} %chapter 1

\subsection{Tiling and undecidability} %chapter 1.1

Problems on the decidability of tilings have constantly attracted many mathematicians.
One of the earliest studies was by Hao Wang in $1961$, where the concept of Wang tiles was introduced~\cite{HW}.
Wang tiles are unit squares with a color assigned to each edge.
Wang’s domino problem asks whether it is possible to tile the plane with a finite set of Wang tiles, with the constraint that matching edges must have the same color.
This problem has later been proven undecidable by Berger, who showed that it was possible to encode any Turing machine using Wang tiles~\cite{RB}.

A translational tiling problem in $\mathbb{Z}^n$ asks whether a finite set of tiles in $\mathbb{Z}^n$ can cover $\mathbb{Z}^n$ using translational copies.
In 1970, Golomb showed that any set of Wang tiles can be encoded by the same number of tiles in $\mathbb{Z}^2$.
Thus, the translational tiling problem in $\mathbb{Z}^2$ is undecidable if the number of tiles can be arbitrarily large~\cite{SW}.
Ollinger proposed the question of tiling $\mathbb{Z}^2$ with $k$ tiles, where $k$ is a fixed integer~\cite{NO}.
In the same paper, Ollinger showed that this problem is undecidable for $k=11$ by constructing $11$ tiles that simulate a given set of Wang tiles.
In other words, the translational tiling problem is undecidable for $(n, \, k)=(2, \, 11)$.

There have been various works proving decidability or undecidability for certain parameters $(n, \, k)$.
For $n=1$ and $(n, \, k)=(2, \, 1)$, the problem is known to be decidable~\cite{VS, HA}.
The currently best undecidability results are $(2, \, 4)$ from Stade~\cite{JS} and $(4, \, 3)$ from Yang and Zhang~\cite{CY}.
Trivially, this implies that the problem is undecidable if $2 \leq n \leq3 , \, k \geq 4$ or $n \geq 4, \, k \geq3$.

Although some papers have studied the translational tiling problem for disconnected tiles, the above two results on undecidability utilize connected tiles.
We will also assume that the problem requires the tiles to be connected.

In this paper, we prove that the problem is undecidable for $(n, \, k)=(3, \, 2)$.\\
In Section~2, we develop a technique that allows us to simulate disconnected polycubes using connected polycubes.\\
In Section~3, we propose a problem called the cyclic triomino problem and prove its undecidability.\\
In Section~4, we construct a set of two polycubes that can encode an arbitrary cyclic triomino problem to prove our main result.\\
In Section~5, we generalize the results of Section~2 to higher dimensions for wider future implications.

\subsection{Notation} %chapter 1.2

In this paper, we use three types of operations on sets of position vectors:

\[
\begin{aligned}
A \oplus B &= \{\, a+b \mid a \in A, \, b \in B \,\} \\
cA &= \{\, ca \mid a \in A \,\} \\
v+A &= A+v = \{\, v+a \mid a \in A \,\} 
\end{aligned}
\]
where $A$, $B$ are sets of position vectors, $v$ is a position vector and $c$ is a positive integer.
Note that all position vectors used in this paper are subsets of $\mathbb{Z}^n$ (where $n$ is some positive integer).
In every usage of the Minkowski sum $A \oplus B$, $A$ and $B$ will be chosen so that the operation is non-overlapping;
that is, $a+b$ is unique for every combination of $a \in A$ and $b \in B$.

An $n$-dimensional tile(including its position) is a non-empty finite subset of $\mathbb{Z}^n$.
The terms \textbf{polyomino} and \textbf{polycube} are used to refer to a $2$-dimensional and $3$-dimensional tile respectively.
All tiles we discuss throughout this paper are subsets of $\mathbb{Z}^n$.

For an $n$-dimensional tile $P$ and a subset $E$ of $\mathbb{Z}^n$,
we say that $P$ tiles $E$ by $W$ if $W \oplus P$ is non-overlapping and $W \oplus P =E$.
For a set of $n$-dimensional tiles $\{ P_1 , \, P_2 , \, \dots , \, P_k \}$, we say that $P_1 , \, P_2 , \, \dots , \, P_k$ tiles $E$
if $E$ can be partitioned into  $E_1 , \, E_2 , \, \dots , \, E_s$ such that one of $P_1 , \, P_2 , \, \dots , \, P_k$ tiles $E_i$ for every $1 \leq i \leq s$.

For $n$-dimensional tiles $P$ and $Q$ satisfying $P \cap Q = \varnothing $,
we say that $P$ and $Q$ are adjacent if there exists $p \in P$, $q \in Q$ and a unit vector $v$ parallel to an axis such that $p+v=q$.

An $n$-dimensional tile $P$ is \textbf{disconnected} if $P$ can be partitioned into sets $A$ and $B$ which are not adjacent.
An $n$-dimensional tile is \textbf{connected} if it is not disconnected.

\section{Simulating disconnected polycubes with connected polycubes} %chapter2

In this section, we show that the tiling of a set of $k$ disconnected polycubes can be simulated by a set of $k$ connected polycubes.
We first show that it is possible to partition a cube so that the parts meet certain conditions.
Then, using this partition, we construct a set of $k$ connected polycubes simulating a given set of $k$ disconnected polycubes.

Throughout this section, we will use the notation $C_l = \{ 0,\, 1,\, \ldots ,\, l - 1 \}^3$ to refer to a $l \times l \times l$ cube.

\subsection{The partition} %chapter 2.1

We first define some desirable properties of the partition that will be used.
Let $P_1,\, P_2, \, \dots ,\, P_s$ be a partition of $C_l$ such that $P_1,\, P_2,\, \dots ,\, P_s$ are all connected polycubes.
A partition is \textbf{internally adjacent} when:
\[
P_i \text{ and } P_j \text{ are adjacent for every } 1 \leq i < j \leq s.
\]
A partition is \textbf{externally adjacent} when:
\[
\begin{aligned}
P_i & + lv \text{ and } P_j \text{ are adjacent for every } 1 \leq i < j \leq s \\
& \text{and every unit vector } v \text{ parallel to an axis.}
\end{aligned}
\]

Since $P_1,\, P_2,\, \dots ,\, P_s$ are all contained inside an $l \times l \times l$ cube,
$P_i + lv$ and $P_j$ are adjacent if and only if $a + (l - 1)v = b$ for some $a \in P_i$ and $b \in P_j$.
This equivalent condition will be used later to show that a partition is externally adjacent.

Now we describe a specific partition of $C_{m+2}$.
We first define a function $f$ that takes a coordinate from $C_{m+2}$ and outputs which partition the coordinate belongs to.

\[
f(x,\, y,\, z) =
\begin{cases}
y & (0 \leq x \leq m,\, 1 \leq y \leq m,\, z = 0)\\
x & (1 \leq x \leq m,\, 0 \leq y \leq m,\, z = m+1)\\
y & (x = 0,\, 1 \leq y \leq m,\, 1 \leq z \leq m)\\
x & (1 \leq x \leq m,\, y = 0,\, 1 \leq z \leq m)\\
z & (1 \leq x \leq m+1,\, 1 \leq y \leq m+1,\, 1 \leq z \leq m)
\end{cases}
\]

For coordinates that do not belong in any of the five cases above, $f(x,\, y,\, z)$ could be assigned any single value.
We refer to these coordinates as \textbf{free coordinates}.
We will simply define $f(x, \, y, \, z) = 1$ if $(x, \, y, \, z)$ is a free coordinate.
Figure $1$ shows the value of $f(x, \, y, \, z)$ for the case $m = 4$; $z$ is increasing from left to right and the blank spaces represent free coordinates.

\begin{figure}[ht]
\centering
\includegraphics[width=0.95 \textwidth]{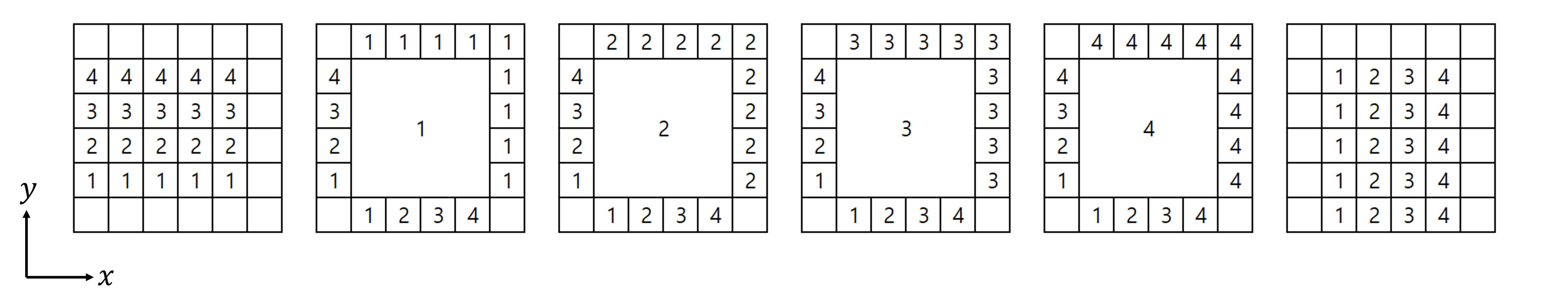}
\caption{The value of $f(x, \, y, \, z)$ for the case $m=4$}
\label{fig:Figure1}
\end{figure}

Let $Q_1, \, Q_2, \, \dots ,\, Q_m$ be a partition of $C_{m+2}$ such that
\[
(x,\, y,\, z) \in Q_i \iff f(x,\, y,\, z) = i.
\]

Figure $2$ shows $Q_1, \, Q_2, \, Q_3$ and $Q_4$ for the case $m=4$; free coordinates have been omitted.

\begin{figure}[ht]
\centering
\includegraphics[width=0.8 \textwidth]{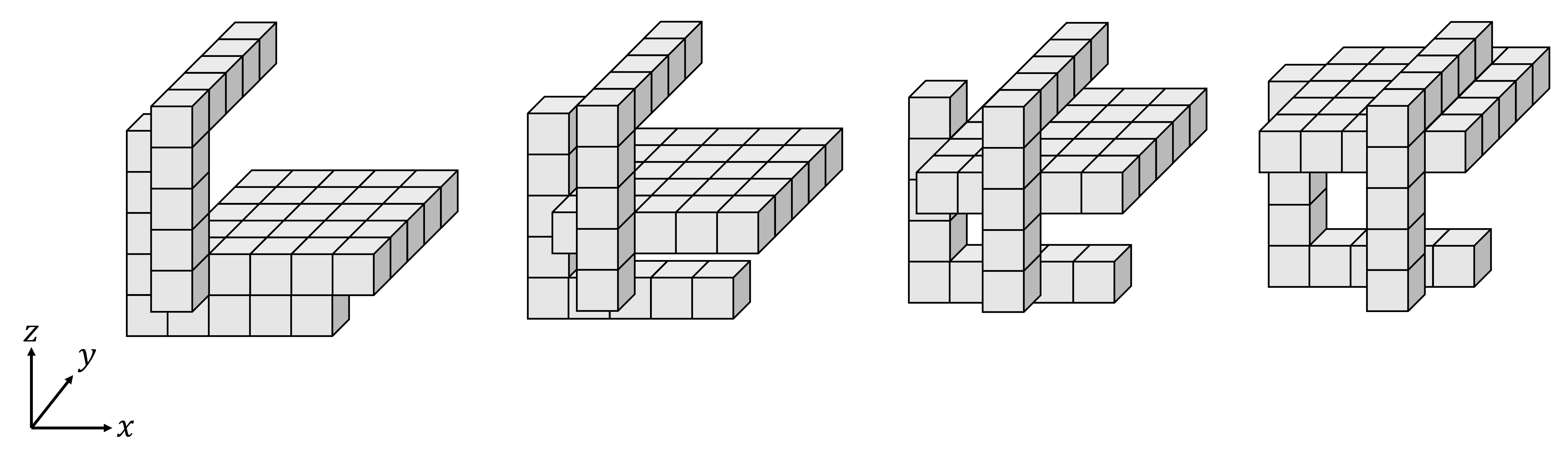}
\caption{$Q_1, \, Q_2 , \, Q_3$ and $Q_4$(from left to right) for $m=4$}
\label{fig:Figure2}
\end{figure}

It can easily be checked that $Q_1, \, Q_2, \, \dots ,\, Q_m$ are all connected polycubes.
Let us show that this partition is both internally adjacent and externally adjacent.

\begin{lemma} %[2.1]
The partition $Q_1, \, Q_2, \, \dots , \, Q_m$ is internally adjacent.
\end{lemma}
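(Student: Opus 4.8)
The plan is to show that for any pair of indices $1 \le i < j \le m$, the polycubes $Q_i$ and $Q_j$ contain two coordinates that differ by a unit vector parallel to an axis. The cleanest route is to exhibit, for each such pair, an explicit witness pair of points. Looking at the definition of $f$, the fifth case (the ``bulk'' region $1 \le x \le m+1$, $1 \le y \le m+1$, $1 \le z \le m$) assigns the value $z$, so within that slab the partition is just a stack of horizontal layers $Q_1, Q_2, \dots, Q_m$ with $Q_i$ sitting at height $z=i$. Two consecutive layers $Q_i$ and $Q_{i+1}$ are obviously adjacent (pick any $(x,y)$ in the common cross-section and use the unit vector $e_3$), but layers that are far apart in height are not adjacent inside the slab alone, so I will need the other four cases of $f$ to bridge them.

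First I would observe that the first case of $f$ gives, on the bottom face $z=0$, a set of columns: for each $1 \le y \le m$, the whole strip $\{(x,y,0) : 0 \le x \le m\}$ lies in $Q_y$. Similarly the third case gives, on the face $x=0$, the strip $\{(0,y,z) : 1 \le z \le m\}$ in $Q_y$ for each $1 \le y \le m$. So $Q_y$ reaches up the wall $x=0$ from $z=1$ to $z=m$. Now for a given pair $i < j$, I want to connect $Q_i$ to $Q_j$. The idea is: $Q_i$ contains the point $(0, i, j)$? Let me check — that is in case 3 (with $x=0$, $y=i$, $z=j$, and $1 \le j \le m$), so $f(0,i,j) = i$, yes. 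And $Q_j$ contains $(1, j, j)$ via case 5 (with $1 \le 1 \le m+1$, $1 \le j \le m+1$, $1 \le j \le m$), giving $f(1,j,j) = j$. These are not adjacent. A better approach: for each pair $i<j$, find a single ``hinge'' coordinate axis direction along which they touch. Concretely, in the bottom face $z=0$, the point $(0,i,0) \in Q_i$ (case 1) and $(0,j,0)$... these differ by $(j-i)e_2$, not a unit vector unless $j = i+1$. So adjacency of arbitrary pairs is genuinely not via a single step in one region; rather the partition's layers all meet along the edge $x=0,\,z$ small, etc. The honest statement is that I should find, for each pair, SOME adjacent pair of cells, and the structure of $f$ is designed so that $Q_i$ and $Q_j$ share a face somewhere.

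Here is the mechanism I expect the author uses, and which I would follow. Consider the edge region where case 1 meets case 3: near $(0, y, 0)$ and $(0, y, 1)$. Case 1 at $(0,y,0)$ gives $Q_y$; case 3 at $(0,y,1)$ gives $Q_y$ too — consistent. The real crossing happens because, say, on the line $x = 0$, $z = 1$, varying $y$ from $1$ to $m$, we pass through $Q_1, Q_2, \dots, Q_m$ consecutively, so consecutive ones are adjacent there. To get $Q_i$ adjacent to $Q_j$ for $j > i+1$, I would use the second and fourth cases symmetrically: case 4 on the wall $y=0$ puts the strip $\{(x,0,z):1\le z\le m\}$ into $Q_x$, so along $y=0$, $z$ fixed, varying $x$ gives $Q_1,\dots,Q_m$ consecutively again. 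So the same obstruction persists — every region only gives adjacency of consecutive indices. Therefore the lemma as stated ($Q_i$ adjacent to $Q_j$ for ALL $i<j$) must be achieved by a cleverer feature: I now suspect the free coordinates, or the corners where multiple cases abut, let a single $Q_i$ wrap around so that it is adjacent to all others. I would therefore carefully locate, for each $i$, a portion of $Q_i$ that runs along a full axis-parallel line that all other $Q_j$ also touch. A natural candidate: does $Q_i$ contain a cell adjacent to the line $x=0, y=0$? Case 3 needs $y \ge 1$ and case 4 needs $x \ge 1$, so the line $x=y=0$ consists of free coordinates (all assigned to $Q_1$), which won't help for general $i$.

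Given this, the key steps in my proof plan are: (1) write down, for each ordered pair $(i,j)$ with $i<j$, an explicit pair of coordinates $p \in Q_i$, $q \in Q_j$ with $q - p$ a coordinate unit vector, organized by cases on where $i$ and $j$ sit relative to $1$ and $m$; (2) verify each claimed membership by plugging into the five-case definition of $f$ and checking the inequality constraints; (3) handle boundary indices ($i=1$ or $j=m$) and the interaction with free coordinates separately if needed. The main obstacle I anticipate is exactly the one surfaced above: it is not obvious that non-consecutive layers $Q_i, Q_j$ touch, so the crux of the argument is identifying the right geometric feature of this particular $f$ — most likely a ``staircase'' or corner configuration near one of the edges of $C_{m+2}$ where the transition regions of cases 1--4 meet case 5, along which a cell of $Q_i$ at one height ends up face-adjacent to a cell of $Q_j$ at a different height because the partition bends around the boundary of the cube. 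I would spend the bulk of the writeup pinning down that configuration precisely and then the verification is routine arithmetic with the inequalities.
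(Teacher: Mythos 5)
Your plan never actually produces the witness pair, and the exploratory reasoning that precedes it steers you away from where the witness lives. You correctly note that case 4 puts the strip $\{(x,0,z) : 1 \le x \le m,\ 1 \le z \le m\}$ into $Q_x$ and that case 5 labels the bulk by the $z$-coordinate, but you then conclude that ``every region only gives adjacency of consecutive indices'' and go hunting for a staircase or corner feature. The point you miss is that the adjacency you need is not \emph{within} either region but \emph{between} them, across a single step in the $y$-direction: the wall $y=0$ is labeled by $x$ while the adjacent slice $y=1$ (which belongs to the bulk, case 5) is labeled by $z$, and $x$ and $z$ vary independently over $1,\dots,m$. Concretely, for any $1 \le i < j \le m$ the point $(i,0,j)$ falls under case 4, so $f(i,0,j)=i$ and $(i,0,j)\in Q_i$, while $(i,1,j)$ falls under case 5, so $f(i,1,j)=j$ and $(i,1,j)\in Q_j$; these differ by the unit vector $(0,1,0)$, which is exactly the paper's one-line proof. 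Because the two labeling axes ($x$ on the wall, $z$ in the bulk) are transverse, \emph{every} pair $(i,j)$ is realized by such a face-contact, not just consecutive pairs.

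So the gap is concrete: step (1) of your plan (``write down, for each pair, an explicit pair of coordinates'') is the entire content of the lemma, and you defer it with the admission that the crux is ``identifying the right geometric feature,'' which you do not identify. A referee would read your proposal as a correct description of the difficulty together with an unfinished search for the mechanism; as written it does not constitute a proof, even though the missing observation is a two-line computation with the definition of $f$ that your own partial analysis had already laid out.
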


\begin{proof}
For every $1 \leq i < j \leq m$, $f(i,\, 0,\, j) = i$ and $f(i,\, 1,\, j) = j$.
Thus $(i,\, 0,\, j) \in Q_i$ and $(i,\, 1,\, j) \in Q_j$.
Since these two coordinates differ by the unit vector $(0,\, 1,\, 0)$, $Q_i$ and $Q_j$ are adjacent.
\end{proof}

\begin{lemma} %[2.2]
The partition $Q_1,\, Q_2,\, \dots ,\, Q_m$ is externally adjacent.
\end{lemma}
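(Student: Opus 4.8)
The plan is to exhibit, for each pair $1 \leq i < j \leq m$ and each of the six unit vectors $v$ parallel to an axis, an explicit coordinate $a \in Q_i$ and $b \in Q_j$ with $a + (m+1)v = b$, using the equivalent condition stated in the excerpt (namely that $P_i + lv$ and $P_j$ are adjacent iff $a + (l-1)v = b$ for some $a \in P_i$, $b \in P_j$, with $l = m+2$). So for a fixed axis direction I need witnesses whose coordinates in that axis differ by exactly $m+1$, i.e. one lies on the face where that coordinate is $0$ and the other on the face where it is $m+1$.

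First I would handle $v = \pm(1,0,0)$. Here I want $a$ with $x$-coordinate $0$ in $Q_i$ and $b$ with $x$-coordinate $m+1$ in $Q_j$ (or the reverse). Looking at the defining cases of $f$: the face $x = 0$ with $1 \leq y \leq m$, $1 \leq z \leq m$ gives $f(0,y,z) = y$, while the face $z = m+1$ with $1 \leq x \leq m$ gives $f(x,y,m+1) = x$. These two faces do not share the needed $x$-coordinates directly, so instead I would look for a single value of the remaining coordinates that works: I expect something like $a = (0, i, j)$ giving $f = i$ (so $a \in Q_i$) paired with $b = (m+1, i, j)$ — but $(m+1, i, j)$ is a free coordinate, which lands in $Q_1$, not $Q_j$. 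This is exactly where care is needed. The right move is to use the bottom/top faces: on $z = 0$ we have $f(x,y,0) = y$ for $0 \leq x \leq m$, and on $z = m+1$ we have $f(x,y,m+1) = x$ for $1 \leq x \leq m$. For $v = (1,0,0)$: I'd take $a = (0, i, 0) \in Q_i$ (valid since $1 \leq i \leq m$) and $b = (m+1, ?, 0)$ — but $x = m+1$, $z = 0$ is free, again $Q_1$. So the genuinely correct witnesses must come from the interior slab case $f(x,y,z) = z$ for $1 \leq x \leq m+1$, $1 \leq y \leq m+1$, $1 \leq z \leq m$, which is the only case reaching $x = m+1$. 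Pairing $z$-slabs: for $v = (0,0,1)$, take $a = (1,1,i) \in Q_i$ and note $(1,1,m+1)$ is free, so instead use the top face $f(x,y,m+1)=x$. I would pair $a=(j,1,m)\in Q_j$? No — I need $Q_i$ then $Q_j$. The cleanest uniform choice I'd aim for: for the $z$-direction, $a=(i+1, i+1, \text{something})$...

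Rather than guess further in the sketch, the structure of the proof I would write is: six short paragraphs, one per unit vector, each displaying a formula for $a$ and $b$ as functions of $i$ and $j$, then citing the five cases of $f$ to verify $f(a) = i$ and $f(b) = j$, then invoking $a + (m+1)v = b$ together with the ``equivalent condition'' remark. The key insight making this possible is that for each axis direction, at least one of the five faces/slabs in the definition of $f$ spans coordinate value $0$ and another spans coordinate value $m+1$ along that axis, and their ``label'' outputs ($x$, $y$, or $z$) can be tuned to hit any prescribed $i$ and $j$; when the naive endpoint is a free coordinate I reroute through whichever slab (the interior $z$-slab for the $x$- and $y$-directions, the top/bottom faces for the $z$-direction) actually carries the correct label at that boundary.

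The main obstacle, and the only real content, is the bookkeeping around free coordinates: a witness pair must avoid having either endpoint be a free coordinate (which would silently be reassigned to $Q_1$), so for each of the six directions I must check that the chosen $a$ and $b$ genuinely satisfy one of the five listed cases of $f$ with output exactly $i$ and exactly $j$ respectively — including the boundary constraints like $1 \leq x \leq m$ versus $1 \leq x \leq m+1$. Once the right pairs are identified this is a finite, mechanical verification with no inequalities subtler than $1 \leq i < j \leq m$, so I would present it compactly as a table or a short case list rather than belaboring each arithmetic check.
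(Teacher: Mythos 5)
Your overall strategy is the paper's: for each of the six unit vectors $v$, exhibit explicit $a \in Q_i$, $b \in Q_j$ with $a + (m+1)v = b$ and check membership against the five cases of $f$. But your proposal never completes that finite check, and the one concrete verification you did attempt is wrong: $(m+1,\, i,\, j)$ is \emph{not} a free coordinate. The fifth case of $f$ has range $1 \leq x \leq m+1$, $1 \leq y \leq m+1$, $1 \leq z \leq m$, so $f(m+1,\, i,\, j) = z = j$, i.e.\ $(m+1,\, i,\, j) \in Q_j$. Together with $(0,\, i,\, j) \in Q_i$ (third case), this is exactly the witness pair the paper uses for $v = (1,\,0,\,0)$. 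Having apparently misread the interior slab as stopping at $x = m$, you discarded the correct pair, started second-guessing the $z$-direction as well, and then explicitly deferred the rest (``six short paragraphs\dots displaying a formula for $a$ and $b$'') without ever producing the formulas. Since the entire content of the lemma is precisely this list of pairs plus the membership verifications, what you have written is a plan containing one false membership claim rather than a proof.

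The fix is just to carry out the plan with the correct reading of case five. The paper's witnesses are: for $v=(1,0,0)$, $(0,i,j)\in Q_i$ and $(m+1,i,j)\in Q_j$; for $v=(0,1,0)$, $(i,0,j)\in Q_i$ and $(i,m+1,j)\in Q_j$; for $v=(0,0,1)$, $(j,i,0)\in Q_i$ and $(j,i,m+1)\in Q_j$; for $v=(-1,0,0)$, $(m+1,j,i)\in Q_i$ and $(0,j,i)\in Q_j$; for $v=(0,-1,0)$, $(j,m+1,i)\in Q_i$ and $(j,0,i)\in Q_j$; for $v=(0,0,-1)$, $(i,j,m+1)\in Q_i$ and $(i,j,0)\in Q_j$. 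Note these are consistent with your ``reroute'' heuristic (the interior $z$-slab supplies the endpoints at $x=m+1$ or $y=m+1$, and the $z=0$ and $z=m+1$ faces supply the endpoints for the $\pm z$ directions), so your intuition about which cases of $f$ reach which boundary faces was essentially right; the gap is that you never checked it through, and the check you did make would have steered you away from the correct construction.
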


\begin{proof}
Let us show that for every $1 \leq i < j \leq m$ and every unit vector $v$ parallel to an axis,
there exists $a \in Q_i$ and $b \in Q_j$ such that $a + (m+1)v = b$.
\[
\begin{aligned}
v = (1, \, 0, \, 0)&: (0, \, i, \, j) \in Q_i, \, (m+1, \, i, \, j) \in Q_j \\
v = (0, \, 1, \, 0)&: (i, \, 0, \, j) \in Q_i, \, (i, \, m+1, \, j) \in Q_j \\
v = (0, \, 0, \, 1)&: (j, \, i, \, 0) \in Q_i, \, (j, \, i, \, m+1) \in Q_j \\
v = (-1, \, 0, \, 0)&: (m+1, \, j, \, i) \in Q_i, \, (0, \, j, \, i) \in Q_j \\
v = (0, \, -1, \, 0)&: (j, \, m+1, \, i) \in Q_i, \, (j, \, 0, \, i) \in Q_j \\
v = (0, \, 0, \, -1)&: (i, \, j, \, m+1) \in Q_i, \, (i, \, j, \, 0) \in Q_j
\end{aligned}
\]

The coordinates given above prove our claim for all six unit vectors $v$ parallel to an axis.
Therefore, the partition $Q_1,\, Q_2,\, \dots ,\, Q_m$ is externally adjacent.
\end{proof}

A small modification needs to be made to the partition $Q_1,\, Q_2,\, \dots ,\, Q_m$ before it is utilized.
By replacing $Q_i$ with $3Q_i \oplus C_3$, we obtain a partition of $C_{3m+6}$.
Then three bumps and three dents of size $1 \times 1 \times 1$ are added only to the new $Q_1$.
Specifically, the coordinates $\{(-1,\, 1,\, 1),\, (1,\, -1,\, 1),\, (1,\, 1,\, -1)\}$ are added,
and the coordinates $\{(3m+5, \, 1, \, 1),\, (1, \, 3m+5,\, 1),\, (1, \, 1, \, 3m+5)\}$ are removed.
We define these new sets to be $Q_1',\, Q_2',\, \dots ,\, Q_m'$.
Figure 3 shows $Q_1',\, Q_2',\, Q_3'$ and $Q_4'$ for the case $m=3$, including $3 \times 3 \times 3$ cubes corresponding to free coordinates.

\begin{figure}[ht]
\centering
\includegraphics[width=0.95 \textwidth]{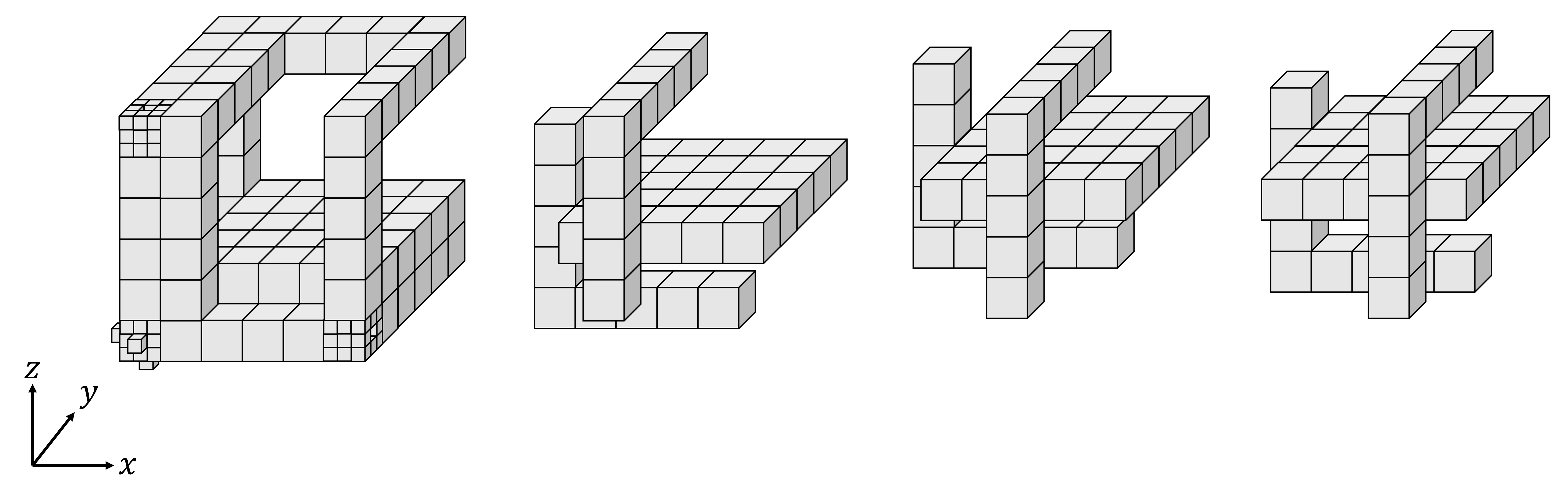}
\caption{$Q_1', \, Q_2', \, Q_3'$ and $Q_4'$(from left to right) for $m=4$}
\label{fig:Figure3}
\end{figure}

\subsection{Constructing the connected polycubes} %chapter 2.2

For a given $l$, let $m = l^3 - (l-2)^3$ and $Q_1', \, Q_2', \, \dots , \, Q_m'$ be the corresponding sets from the construction in Section 2.1.
Let $x_1, \, x_2, \, \dots , \, x_m$ be all the elements of $C_l \setminus (C_{l-2} + (1, \, 1, \, 1))$.
We define $S_l$ as:
\[
S_l = \bigcup_{i=1}^{m} \left( Q_i' + (3m+6)x_i \right).
\]

$S_l$ can be thought as taking only the outermost unit cubes from $C_l$ and replacing them with a unique choice from $\{Q_1',\, Q_2',\, \dots ,\, Q_m'\}$.

\begin{lemma} %2.3
$S_l$ is a connected polycube.
\end{lemma}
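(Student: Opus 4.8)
The plan is to decompose the claim into two parts: first, that each building block $Q_i' + (3m+6)x_i$ is itself connected, and second, that these blocks are glued together into a single connected whole. The first part is immediate, since each $Q_i'$ was already observed to be connected (it is $3Q_i \oplus C_3$ with a few unit bumps and dents attached to a connected piece, and each $Q_i$ was checked to be connected), and translation preserves connectedness. So the heart of the argument is the gluing.

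For the gluing step, the key observation is that two translated copies of cubes, $C_{3m+6} + (3m+6)x_i$ and $C_{3m+6} + (3m+6)x_j$, sit in the configuration of $C_l$ scaled by $3m+6$; in particular, whenever $x_i$ and $x_j$ are adjacent in $C_l$ (differing by a unit vector $v$), the corresponding large cubes are face-adjacent, meeting along a full $(3m+6)\times(3m+6)$ face. I would then invoke the external adjacency of $Q_1, \dots, Q_m$ — and hence of the scaled, bumped-and-dented $Q_1', \dots, Q_m'$ — to conclude that the piece $Q_i'$ in one cube touches the piece $Q_j'$ in the neighboring cube across that shared face, for \emph{every} ordered pair $(i,j)$ and every direction $v$. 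The small bumps and dents added to $Q_1'$ are designed precisely so that external adjacency is not destroyed but rather made robust; one should check they only shift the witnessing coordinates slightly (the bump at $(-1,1,1)$ sticks out of one face, the dent at $(3m+5,1,1)$ recedes from the opposite face, and these are exactly complementary so that adjacent copies of $Q_1'$ still interlock). Concretely, I would argue: pick any block $Q_i' + (3m+6)x_i$; since $C_l \setminus (C_{l-2}+(1,1,1))$ — the set of outermost unit cubes of $C_l$ — is a connected subset of $\mathbb{Z}^3$ (its cells form the "shell" of a cube, which is connected for $l \geq 2$), there is a path of unit steps from $x_i$ to any other $x_j$; walking along this path and using external adjacency at each step shows $Q_i' + (3m+6)x_i$ is connected, within $S_l$, to $Q_j' + (3m+6)x_j$.

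Putting these together: every block is internally connected, and consecutive blocks along any path in the shell of $C_l$ are adjacent, so the union $S_l$ is connected. It is finite and nonempty, hence a connected polycube.

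The main obstacle I anticipate is verifying that external adjacency survives the passage from $Q_i$ to $Q_i' = 3Q_i \oplus C_3$ together with the bump/dent modification — i.e., that for the scaled partition the relevant witness coordinates still lie in $Q_i'$ and $Q_j'$ and still differ by the vector $(3m+5)v = ((3m+6)-1)v$ needed for face-adjacency across the big cubes. For the unmodified scaling this is routine: an adjacency $a + (m+1)v = b$ with $a \in Q_i, b \in Q_j$ lifts to $3a + (3m+3)v + w' = 3b + w$ for suitable $w, w' \in C_3$, and one checks the offset works out to $3m+5$ in the $v$-direction by choosing the $C_3$-offsets at the touching faces. The only case needing care is $i=1$ or $j=1$, where a bump may have been added or a dent removed on the face in question; here one uses that the added coordinates $\{(-1,1,1),(1,-1,1),(1,1,-1)\}$ and removed coordinates $\{(3m+5,1,1),(1,3m+5,1),(1,1,3m+5)\}$ are positioned exactly so that a bump on the "$-v$ face" of one copy of $Q_1'$ fits into the dent on the "$+v$ face" of the neighboring copy, keeping the adjacency (indeed making it a genuine interlocking). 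I would isolate this as the one nontrivial verification and handle it by exhibiting the explicit witness coordinates, direction by direction, as was done in the proof of external adjacency above.
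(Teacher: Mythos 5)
Your proof is correct and follows essentially the same route as the paper's: each $Q_i'$ is connected, the shell $\{x_1,\dots,x_m\}$ of $C_l$ is a connected polycube, and adjacency of neighboring blocks $Q_i'+(3m+6)x_i$ and $Q_j'+(3m+6)x_j$ is reduced to the externally adjacent property of $Q_1',\dots,Q_m'$. The extra verification you flag---that external adjacency survives the scaling $Q_i\mapsto 3Q_i\oplus C_3$ and the bump/dent modification of $Q_1'$ (lifting witnesses $a+(m+1)v=b$ to displacement $3m+5$ and checking the added/removed cells do not interfere)---is precisely the step the paper leaves implicit, and your sketch of it is sound.
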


\begin{proof}
$Q_1',\, Q_2',\, \dots ,\, Q_m'$ are all connected polycubes and $\bigcup_{i=1}^{m} x_i$ is a connected polycube.
Thus, it is sufficient to show that if $x_i$ and $x_j$ are adjacent, then so are $Q_i' + (3m+6)x_i$ and $Q_j' + (3m+6)x_j$.
This is equivalent to $Q_i'$ being adjacent to $Q_j' + (3m+6)(x_j - x_i)$, which follows from the externally adjacent property.
\end{proof}

\begin{lemma} %2.4
$S_l$ tiles $\mathbb{Z}^{3}$ only by $(3m+6)\mathbb{Z}^3$ up to translation.
\end{lemma}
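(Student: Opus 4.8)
The plan is to show that any tiling of $\mathbb{Z}^3$ by translates of $S_l$ must have its translation set equal to a coset of $(3m+6)\mathbb{Z}^3$. The key structural fact is that $S_l$ is, up to the small bumps and dents on $Q_1'$, a union of $m$ cubes $C_{3m+6}$ placed at the positions $(3m+6)x_i$ where the $x_i$ are exactly the outer shell $C_l \setminus (C_{l-2}+(1,1,1))$. So $S_l$ is very nearly the ``hollow cube'' $(3m+6)\bigl(C_l \setminus (C_{l-2}+(1,1,1))\bigr) \oplus C_{3m+6}$, and intuitively a hollow cube can only tile space in the obvious lattice way, since the hollow interiors must be filled by other translates, which forces a rigid brick-wall alignment. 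The bumps and dents on $Q_1'$ are there precisely to kill the one remaining degree of freedom (a shear/offset of successive layers) that a plain hollow cube would allow.

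First I would set up coordinates: suppose $W \oplus S_l = \mathbb{Z}^3$ is a tiling. Look at the large scale, i.e. reduce everything modulo $3m+6$ — but more usefully, observe that each $Q_i'$ sits inside a translate of $C_{3m+6}$ (the bumps stick out by $1$, so really inside a slightly enlarged box, but the dents compensate). The cleanest route is to first argue that the copies of $S_l$ must align on the $(3m+6)$-grid. To do this I would consider an extreme cell of $\mathbb{Z}^3$, say with minimal first coordinate among those not yet covered in an inductive filling, and show that whichever translate of $S_l$ covers it is forced into grid position because $S_l$ contains, near each of its own extreme cells, a solid $C_{3m+6}$-block (coming from some $Q_i'$ which is itself a union of $C_3$-blocks, hence contains solid sub-blocks); a solid block of side $3m+6$ meeting another copy of $S_l$ forces the relative offset to be a multiple of... this needs care. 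The honest approach: show the ``outline'' of $S_l$ at scale $3m+6$ is the hollow cube shell, reduce the tiling question to tiling $\mathbb{Z}^3$ by this shell-shaped polycube (at the coarse scale), then handle the fine-scale ambiguity.

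So concretely the steps are: (1) Show that in any tiling $W\oplus S_l=\mathbb{Z}^3$, the set $W$ is contained in a single coset of $(3m+6)\mathbb{Z}^3$ — equivalently, all translates are aligned on the coarse grid. Here I would use that $S_l$ decomposes into the $m$ pieces $Q_i'+(3m+6)x_i$, each essentially filling one coarse-grid cell indexed by $x_i$, so at the coarse scale $S_l$ ``is'' the shell $X := C_l\setminus(C_{l-2}+(1,1,1))$; since $X$ has a cell with a concave corner (the interior hole), any translate of $S_l$ covering the fine cells of a fixed coarse cell must do so with a coarse-aligned copy — otherwise two copies would overlap in the solid $C_3$-sub-blocks. (2) Given coarse alignment, the tiling descends to a tiling of $\mathbb{Z}^3$ (indexing coarse cells) by the polycube $X \cong C_l \setminus (C_{l-2}+(1,1,1))$, together with the combinatorial data of which $Q_i'$ sits where. (3) Show the hollow cube $X$ tiles $\mathbb{Z}^3$ only in the obvious periodic way up to a per-layer shear, and (4) show the bumps/dents on $Q_1'$ — which appear at a distinguished position in each copy — forbid the shear, pinning $W=(3m+6)\mathbb{Z}^3$ up to translation.

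The main obstacle is step (1)/(3): ruling out the ``layer shear.'' A hollow cube $C_l\setminus(C_{l-2}+(1,1,1))$ does tile $\mathbb{Z}^3$ by the lattice $\mathbb{Z}^3$ (each hollow cube filled by the corner cells of $26$ neighbours isn't quite right either — actually the hollow interior of one copy is filled by a solid sub-block of exactly one other copy, nested matryoshka-style), and the genuine difficulty is that one can slide entire planar layers of these nested columns relative to each other while still tiling. The bumps and dents are designed so that the protruding cell on one face of $Q_1'$ must plug into the matching recessed cell on the opposite face of a neighbouring copy, and since these occur on three mutually perpendicular faces, all three independent shear freedoms are eliminated. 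Making this last argument rigorous — tracking where the three bump cells $\{(-1,1,1),(1,-1,1),(1,1,-1)\}$ of each copy go and checking they force exact grid alignment in all three directions — is the crux; I expect it to come down to: the bump cell must be covered by some copy, its only possible home is the matching dent of the uniquely-positioned neighbour, and this rigidity propagates by connectivity to all copies.
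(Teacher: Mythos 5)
There is a genuine gap, and it starts with a misreading of the construction of $S_l$. You treat $S_l$ as ``up to bumps and dents, a union of $m$ full cubes $C_{3m+6}$ placed at the shell positions $(3m+6)x_i$,'' i.e.\ essentially the fattened hollow cube $(3m+6)\bigl(C_l\setminus(C_{l-2}+(1,1,1))\bigr)\oplus C_{3m+6}$. That is not what $S_l$ is: each shell position $x_i$ receives only the single partition piece $Q_i'$, and the pieces $Q_1',\dots,Q_m'$ together partition one cube $C_{3m+6}$, so $|S_l|=(3m+6)^3$, the volume of a single coarse cube, not of $m$ of them. The whole point of the construction is that in the lattice tiling each coarse cube of $\mathbb{Z}^3$ is filled by all $m$ distinct pieces coming from $m$ different copies of $S_l$; there is no coarse-scale tiling by a hollow-cube shape at all (indeed the hollow cube $C_l\setminus(C_{l-2}+(1,1,1))$ cannot tile $\mathbb{Z}^3$ by translations, since nothing can fill its enclosed interior). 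Consequently your steps (2)--(4) --- descending to a tiling by the shell $X$, classifying hollow-cube tilings up to ``per-layer shear,'' and using the bumps/dents only to kill that shear --- do not correspond to anything in the actual situation, and your step (1) argument (overlap of ``solid $C_{3m+6}$-blocks'' inside some $Q_i'$) also fails because no $Q_i'$ contains such a block; the solid sub-blocks are only of size $C_3$.

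The argument the paper actually uses is shorter and different in both halves. First, rigidity: the three dent cells of each copy of $Q_1'$ are unit holes walled on five sides by the same copy, and since every other cell of every copy of $S_l$ lies in a $3\times3\times3$ block, the only thing that can fill a dent is a bump cell of another copy's $Q_1'$, which pins the relative displacement to exactly $(3m+6)e_i$; propagating this forces all copies of $Q_1'$, hence all copies of $S_l$, onto one lattice $(3m+6)\mathbb{Z}^3$ up to translation. Second, existence: one must still check that the lattice arrangement is a tiling, which uses precisely the fact you discarded --- that $Q_1',\dots,Q_m'$ partition $C_{3m+6}$ --- via the computation $S_l\oplus(3m+6)\mathbb{Z}^3=\bigl(\bigcup_i Q_i'\bigr)\oplus(3m+6)\mathbb{Z}^3=\mathbb{Z}^3$ with no overlaps. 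Your proposal contains a correct instinct about the role of the bumps and dents (they enforce exact $(3m+6)$-alignment of the $Q_1'$ copies), but the surrounding reduction is built on the wrong decomposition of $S_l$, and the verification that the forced lattice placement actually tiles is missing entirely.
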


\begin{proof}
If $S_l$ tiles $\mathbb{Z}^{3}$, the dents and bumps on $Q_1'$ force all copies of $Q_1'$ to lie on a lattice of side length $3m+6$.
Thus, $S_l$ must also lie on the same lattice. If copies of $S_l$ are translated by every vector in $(3m+6)\mathbb{Z}^3$,
no two tiles overlap since $Q_i' \cap Q_j' = \varnothing$ if $i \neq j$. Furthermore,
\[
\begin{aligned}
S_l \oplus (3m+6)\mathbb{Z}^3 &= \left( \bigcup_{i=1}^{m} (Q_i' + (3m+6)x_i) \right) \oplus (3m+6)\mathbb{Z}^3\\
&= \bigcup_{i=1}^{m} \left( Q_i' \oplus (3m+6)\mathbb{Z}^3 + (3m+6)x_i \right)\\
&= \bigcup_{i=1}^{m} \left(Q_i' \oplus (3m + 6)\mathbb{Z}^3 \right) \\
&= \left( \bigcup_{i=1}^{m} Q_i' \right) \oplus (3m+6)\mathbb{Z}^3 = \mathbb{Z}^3
\end{aligned}
\]
which shows that $S_l$ tiles $\mathbb{Z}^{3}$ by $(3m+6)\mathbb{Z}^3$.
\end{proof}

By Lemma 2.4, we can see that $S_l$ behaves similar to $C_{3m+6}$ which can only be translated by $(3m+6)\mathbb{Z}^3$.

\begin{theorem} %2.5
Given a set of $k$ polycubes, some of which may be disconnected,
there is a set of $k$ connected polycubes such that the set of tilings by the two sets of polycubes have a one-to-one correspondence.
\end{theorem}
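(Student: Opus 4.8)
The plan is to use the connected polycube $S_l$ from Lemma 2.5 as a "fat cube" that can play the role of an ordinary unit cube $C_l$, and then to build each connected replacement tile by gluing together copies of $S_l$ in the same pattern as the cells of the original (possibly disconnected) tile, rescaled by a factor of $l$. First I would fix a scale factor $l$ large enough that the original $k$ polycubes, when each cell $(a,b,c)$ is blown up to the block $l\cdot(a,b,c) \oplus C_l$, become well-separated; concretely, it suffices to take $l$ larger than the diameter of every tile in the given set, so that two blown-up cells are adjacent in $\mathbb{Z}^3$ precisely when the corresponding original cells were adjacent. Write $N = 3m+6$ where $m = l^3-(l-2)^3$. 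For each original tile $P^{(t)} = \{a^{(t)}_1,\dots,a^{(t)}_{r_t}\}$ I define the new tile
\[
\widehat{P}^{(t)} \;=\; \bigcup_{j=1}^{r_t}\Bigl( S_l + N\,l\,a^{(t)}_j \Bigr).
\]
By Lemma 2.5, $S_l$ lies on the lattice $N\mathbb{Z}^3$ up to translation, so each $S_l + Nla^{(t)}_j$ is a union of $m$ of the blocks $Q_i' + N x$ with $x$ ranging over the $l$-cube shell around $l a^{(t)}_j$; since the original cells $a^{(t)}_j$ are distinct, these shells are disjoint, so $\widehat{P}^{(t)}$ is a genuine polycube, and it is connected by the same externally-adjacent argument used in Lemma 2.5 (adjacent cells of $P^{(t)}$ give adjacent copies of $S_l$).

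The heart of the argument is the one-to-one correspondence between tilings. In one direction, given a tiling of $\mathbb{Z}^3$ by $\{P^{(1)},\dots,P^{(k)}\}$, say with translation sets $W_1,\dots,W_k$, I claim that the sets $N l\,W_1,\dots,N l\,W_k$ tile $\mathbb{Z}^3$ by $\{\widehat{P}^{(1)},\dots,\widehat{P}^{(k)}\}$. This reduces, via the same Minkowski-sum manipulation as in Lemma 2.5, to the identity $\bigcup_t \widehat{P}^{(t)} \oplus N l\,W_t \oplus N\mathbb{Z}^3 = \mathbb{Z}^3$, which in turn follows from $\bigl(\bigcup_j \{a^{(t)}_j\}\bigr)$-shells partitioning the $l$-cube shell and from $\{P^{(t)}\}$ tiling $\mathbb{Z}^3$ by $\{W_t\}$ after rescaling by $l$ and filling each block with the $Q_i'$ partition of $C_N$. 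Conversely, given any tiling of $\mathbb{Z}^3$ by the $\widehat{P}^{(t)}$, the three bumps and three dents on $Q_1'$ — which appear in every copy of $S_l$ and hence in every copy of every $\widehat{P}^{(t)}$ — force all copies of $Q_1'$, and therefore all the tiles, onto a single translate of the lattice $N\mathbb{Z}^3$; restricting attention to which $Q_1'$-block sits at each site of $\frac{1}{N}\mathbb{Z}^3 \cong \mathbb{Z}^3$ and then which $S_l$-copy it belongs to recovers, after dividing coordinates by $l$, a tiling of $\mathbb{Z}^3$ by the original $\{P^{(t)}\}$. Checking that these two maps are mutually inverse is then bookkeeping.

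The main obstacle I expect is the rigidity step in the converse direction: one must argue carefully that in an \emph{arbitrary} tiling by the $\widehat{P}^{(t)}$, the bump/dent decorations really do pin down the full lattice structure, i.e. that the copies of $Q_1'$ cannot interlock in some unforeseen skew fashion, and that once the $Q_1'$-copies are on the lattice the remaining blocks $Q_2',\dots,Q_m'$ are forced to complete each $C_N$ cube (this uses $\bigcup_i Q_i' = C_N$ minus free coordinates, and that free coordinates are filled by the $Q_1'$ modifications plus neighboring tiles). A secondary point to get right is that distinct copies of $S_l$ within one $\widehat{P}^{(t)}$, or across different tiles, never collide — this is where the disjointness $Q_i'\cap Q_j'=\varnothing$ and the choice of $l$ exceeding the tile diameter are both essential. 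Everything else is the routine Minkowski-sum algebra already rehearsed in Lemmas 2.4 and 2.5.
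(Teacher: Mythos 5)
There is a genuine gap, and it is the scale at which you glue the copies of $S_l$ together. The paper's construction is $P_i' = (3m+6)P_i \oplus S_l$: each cell $a$ of the original tile contributes a copy $S_l + (3m+6)a$, so copies coming from adjacent cells are offset by a \emph{single} $(3m+6)$-block and interlock (this interlocking is the whole point of building $S_l$ as the shell of an $l\times l\times l$ arrangement of blocks). Writing $N=3m+6$ as you do, you instead place the copies at $S_l + N l\,a_j$, i.e.\ offset by $l$ blocks, so they do not interlock at all. Your forward map then fails outright: translating $\widehat P^{(t)}$ by the vectors $N l W_t$ puts all copies of $S_l$ on the sparse lattice $N l\,\mathbb{Z}^3$, and these cover only the shell blocks of each $l$-cube of blocks; every interior block $Nlc + Ny$ with $y \in \{1,\dots,l-2\}^3$ is left uncovered, whereas by Lemma 2.4 any tiling by $S_l$ must use the full lattice $N\mathbb{Z}^3$. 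The displayed identity with the extra ``$\oplus\, N\mathbb{Z}^3$'' is where this is papered over: that factor is not available, since the translation set you propose is $NlW_t$, not $NlW_t \oplus N\mathbb{Z}^3$.

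Even after repairing existence (e.g.\ by repeating the original tiling on each of the $l^3$ cosets of $l\mathbb{Z}^3$), the claimed one-to-one correspondence is lost with your tiles. In any tiling by the $\widehat P^{(t)}$, Lemma 2.4 forces the $S_l$-copies onto some $v_0 + N\mathbb{Z}^3$; dividing positions by $N$, the tiles give a tiling of $\mathbb{Z}^3$ by translates of the sets $lP^{(t)}$. Since $lP^{(t)} \subset l\mathbb{Z}^3$, each such translate lies in one coset of $l\mathbb{Z}^3$ and the cosets are mutually independent, so your tilings correspond to $l^3$-tuples of original tilings, not to single ones. With the paper's scale $N$ the problem disappears: a tile placed at $v_0 + Nw'$ has its $S_l$-copies at $v_0 + N(w' + a_j)$, so covering $v_0 + N\mathbb{Z}^3$ exactly says that the sets $P^{(t)} + w'$ partition $\mathbb{Z}^3$, which is the desired bijection; moreover no condition that $l$ exceed the tile diameters is needed (any $l$ with each tile fitting in $C_l$ works), and connectivity of $P_i'$ comes from the \emph{internally} adjacent property of the overlapping shells rather than from external adjacency — note also that your external-adjacency argument only covers facing blocks carrying distinct indices $i \neq j$, which the arbitrary enumeration $x_1,\dots,x_m$ does not guarantee.
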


\begin{proof}
Let $P_1,\, P_2,\, \dots ,\, P_k$ be $k$ polycubes.
We choose an integer $l$ such that the given $k$ polycubes can each fit inside an $l \times l \times l$ cube.
We define new polycubes $P_1',\, P_2',\, \dots ,\, P_k'$ as
\[
P_i' = (3m+6)P_i \oplus S_l \quad (1 \leq i \leq k)
\]
where $m = l^3 - (l-2)^3$.
An example of this construction is shown in Figure 4; note that the $2$-dimensional counterpart of $S_l$ was used for easier visualization.

We know from Lemma 2.3 that $S_l$ is a connected polycube.
From the internally adjacent property, $S_l + (3m+6)u$ and $S_l + (3m+6)v$ are adjacent for $u,\, v \in C_l$ and $u \neq v$.
Since some translation of $P_i$ is a subset of $C_l$, every combination of two copies of $S_l$ in $P_i'$ are adjacent. Thus, $P_i'$ is a connected polycube.

Consider a tiling by $P_1',\, P_2',\, \dots ,\, P_k'$. By the definition of these polycubes, any tiling by $P_1',\, P_2',\, \dots ,\, P_k'$ can be seen as a tiling by $S_l$.
By Lemma 2.4, in order to form a tiling, copies of $S_l$ must be translated by every vector in $(3m+6)\mathbb{Z}^3$.
This corresponds to a tiling of $(3m+6)\mathbb{Z}^3$ by $(3m+6)P_1,\, (3m+6)P_2,\, \dots ,\, (3m+6)P_k$,
which directly corresponds to a tiling of $\mathbb{Z}^{3}$ by $P_1$, $P_2$,  $\dots$, $P_k$.
\end{proof}

\begin{figure}[ht]
\centering
\includegraphics[width=0.65 \textwidth]{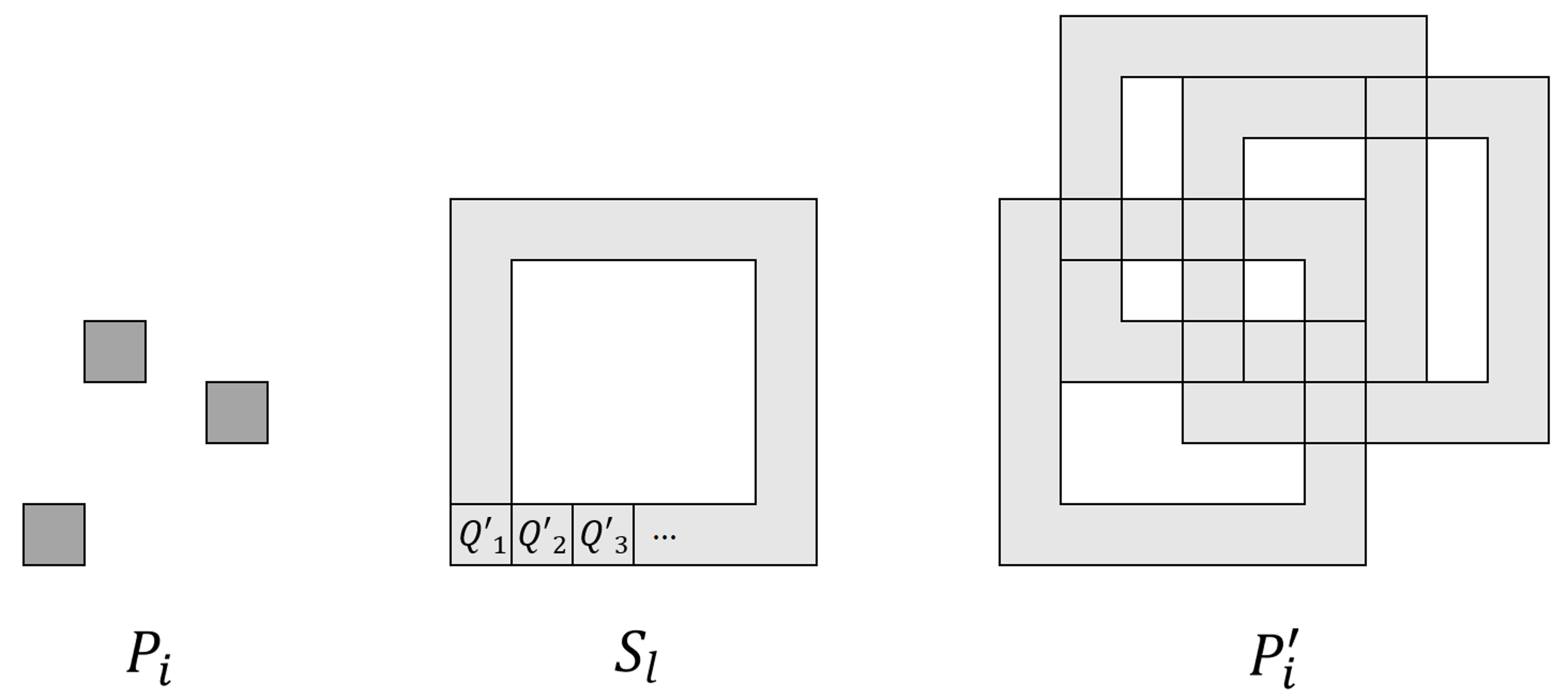}
\captionsetup{width=0.75 \textwidth}
\caption{An example of $P_i$, $S_l$ and $P_i'$ constructed using the $2$-dimensional counterpart of $S_l$}
\label{fig:Figure4}
\end{figure}

\section{Cyclic triomino problems and undecidability} %chapter 3

In this section, we discuss the cyclic triomino problem, which is a variation of Greenfeld and Tao’s domino problem.
We explain the motivations behind creating this variation, then describe the problem in detail.
Lastly, we prove that the cyclic triomino problem is undecidable.

\subsection{The domino problem and the cyclic domino problem} %chapter3.1

First, we explain the domino problem proposed by Greenfeld and Tao \cite{RG}.
A \textbf{domino board} is the lattice $\mathbb{Z}^{2}$, and we define $e_1 = (1,\, 0)$ and $e_2 = (0,\, 1)$.
A \textbf{domino set} $\cR$ is a 3-tuple
\[
\left( \cW, \, \cR_1, \, \cR_2 \right)
\]
where $\cW$ is a non-empty finite set and $\cR_1,\, \cR_2 \subset \cW^2$.
The $\cR$-\textbf{domino problem} asks if there exists a function $\cT \colon \mathbb{Z}^{2} \to \cW$ such that
\[
\left( \cT(s),\, \cT(s + e_i) \right) \in \cR_i
\]
for every $s \in \mathbb{Z}^{2}$.
If $\cT$ is one such function, we say that the $\cR$-domino problem is solvable and $\cT$ is a solution(Figure 5).

The $\cR$-domino problem can be viewed as placing one element of $\cW$ on all points on the domino board,
with the condition that every pair of horizontally or vertically adjacent elements must belong in $\cR_1$ and $\cR_2$ respectively.
Greenfeld and Tao showed that this problem is undecidable from the undecidability of Wang’s domino problem \cite{RG}.

\begin{figure}[ht]
\centering
\includegraphics[width=0.8 \textwidth]{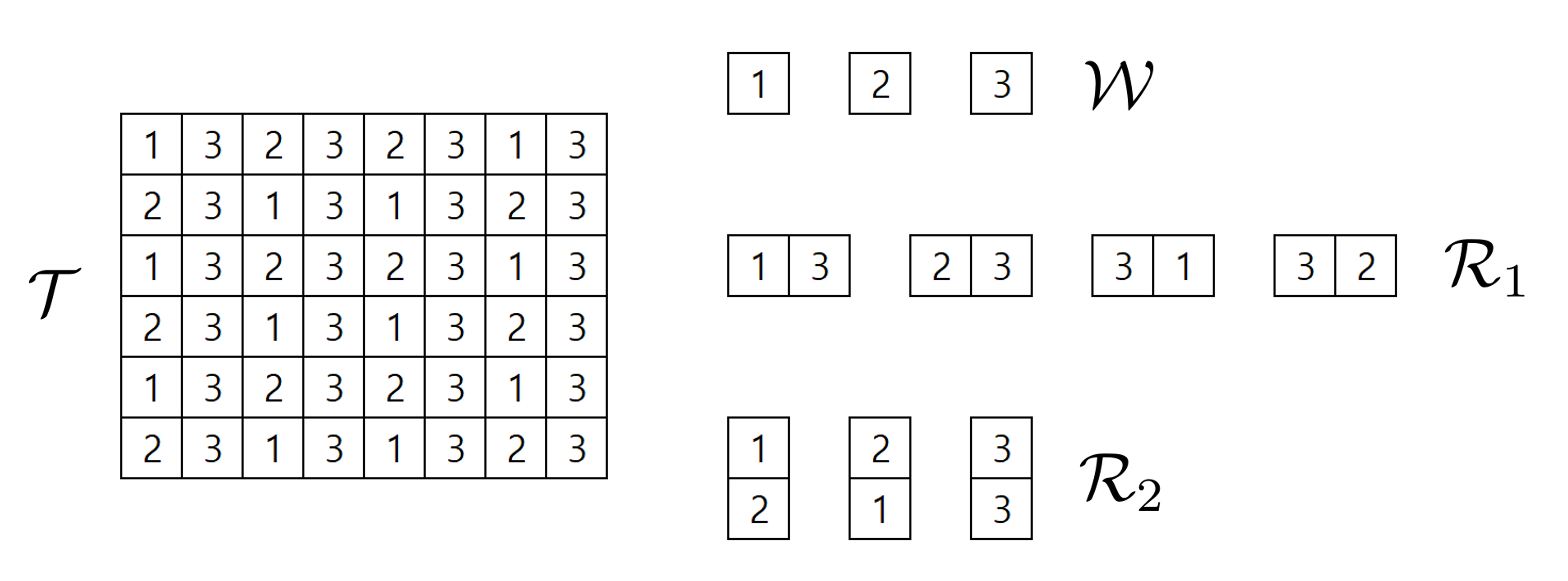}
\captionsetup{width=1 \textwidth}
\caption{An example of a domino problem and its solution}
\label{fig:Figure5}
\end{figure}

However, when we attempt to encode the domino problem with a small number of polycubes, a problem arises.
One may consider using a $1 \times 1 \times n$ cuboid tile, while encoding the choices from $\cW$ as the $z$-coordinates of each cuboid(Figure 6).
If we take one polycube from each row of polycubes repeating vertically, this approach provides a very similar environment to that of the domino problem.

\begin{figure}[ht]
\centering
\includegraphics[width=0.2 \textwidth]{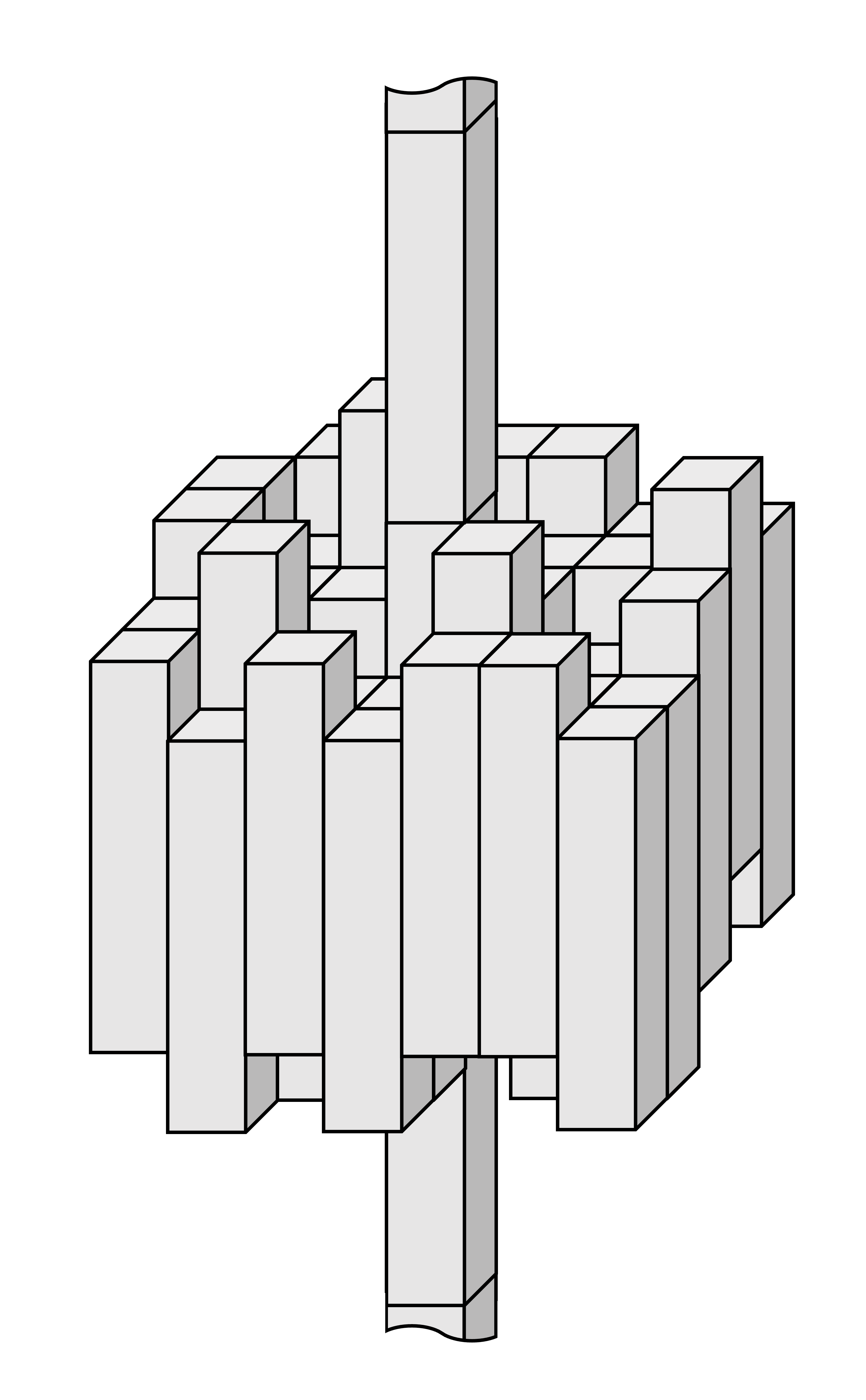}
\caption{A tiling by a $ 1 \times 1 \times n$ cuboid}
\label{fig:Figure6}
\end{figure}

Although it may seem that we can add bumps and dents to these cuboids to enforce the rules of the domino problem,
the set of rules that can be encoded by this approach is very limited.
Since every $1 \times 1 \times n$ cuboid must be identical, if two cuboids located at $z = 1$ and $z = 2$ can be horizontally adjacent,
then so can two cuboids at $z = 1 + k$ and $z = 2 + k$ for some integer $k$.
In other words, $(1,\, 2) \in \cR_1$ implies $(1 + k,\, 2 + k) \in \cR_1$.
The set of rules that can be encoded by this approach can be modeled by the cyclic domino problem.

The \textbf{cyclic domino set} is a domino set $\left( \cW, \, \cR_1,\, \cR_2 \right)$ where
\[
\cW = \mathbb{Z}_n
\]
and
\[
(a,\, b) \in \cR_i ~ \to ~ (a + k,\, b + k) \in \cR_i
\]
for every $k \in \cW$.
If $\cR$ is a cyclic domino set, we say that the $\cR$-\textbf{cyclic domino problem} is solvable if the $\cR$-domino problem is solvable(Figure 7).

\begin{figure}[ht]
\centering
\includegraphics[width=0.68 \textwidth]{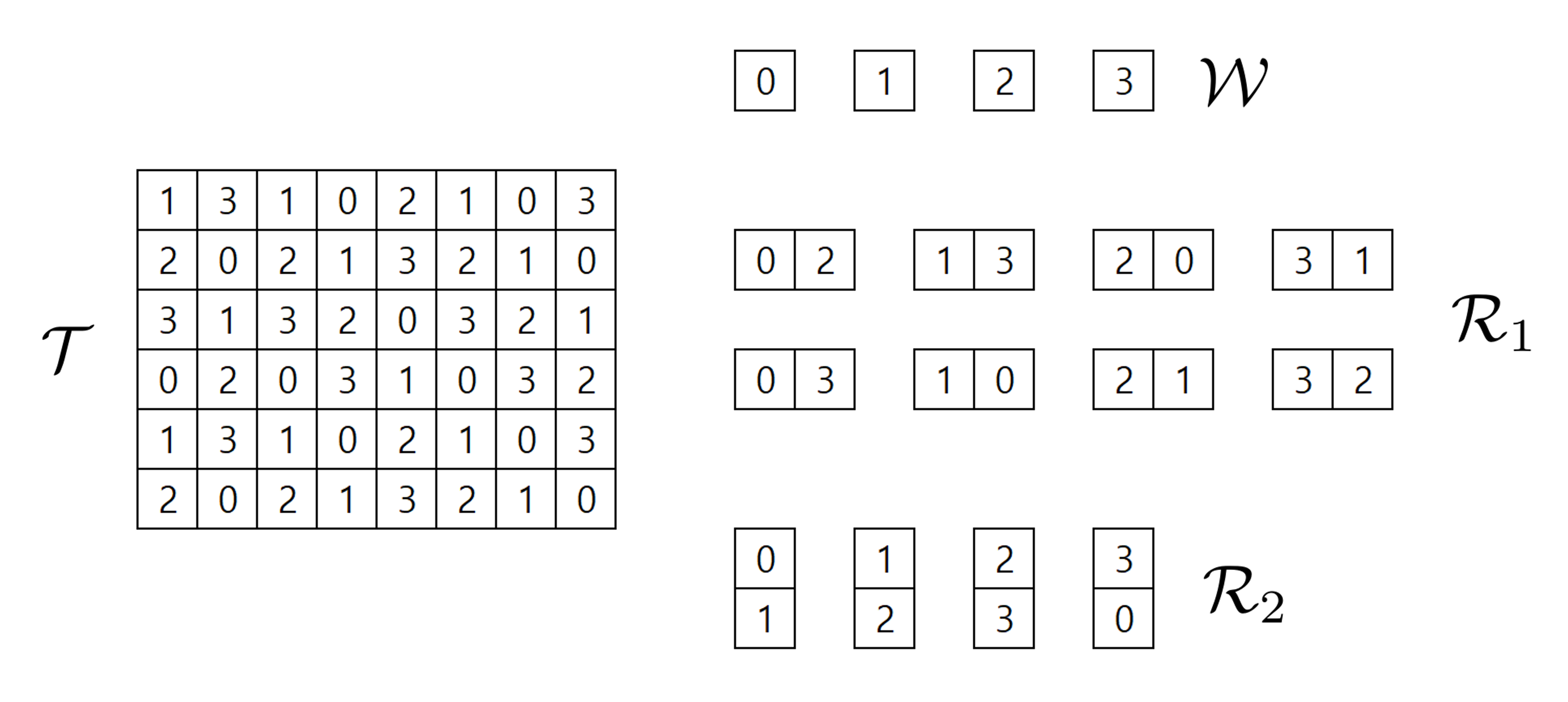}
\captionsetup{width=1 \textwidth}
\caption{An example of a cyclic domino problem and its solution}
\label{fig:Figure7}
\end{figure}

It can easily be proven that the $\cR$-cyclic domino problem is solvable as long as $\cR_1 \neq \varnothing$ and $\cR_2 \neq \varnothing$.
Thus, the cyclic domino problem is decidable.
Instead of replacing the encoding method with one that does not have a cyclic nature, we extend the set of restrictions from the cyclic domino problem.

\subsection{The cyclic triomino problem} %chapter 3.2

A \textbf{triomino board} is the lattice $\mathbb{Z}^{2}$, and let $u_1 = (1,\, 0)$, $u_2 = (0,\, 1)$, $u_3 = (-1,\, 0)$, $u_4 = (0,\, -1)$.
A \textbf{cyclic triomino set} $\cS$ is a 5-tuple
\[
\left( \cV,\, \cS_1,\, \cS_2,\, \cS_3,\, \cS_4 \right)
\]
where
\[
\cV = \mathbb{Z}_n
\]
\[
\cS_1, \, \cS_2, \, \cS_3, \, \cS_4 \subset {\cV}^3
\]
and
\[
(a,\, b,\, c) \in \cS_i ~ \to ~(a + k,\, b + k,\, c + k) \in \cS_i
\]
for every $k \in \cV$.
For convenience, we define $u_{4q+r} = u_r$ and $\cS_{4q+r} = \cS_r$ for $q,\, r \in \mathbb{Z}$ and $1 \leq r \leq 4$.

The $\cS$-\textbf{cyclic triomino problem} asks if there exists a function $\cT \colon \mathbb{Z}^{2} \to \cV$ such that
\[
\left( \cT(s),\, \cT(s + u_i),\, \cT(s + u_{i + 1}) \right) \in \cS_i
\]
for every $s \in \mathbb{Z}^{2}$ and every $1 \leq i \leq 4$.
If $\cT$ is one such function, we say that $\cS$-cyclic triomino problem is solvable and $\cT$ is a solution(Figure 8).
The cyclic triomino problem can be viewed as replacing the domino-shaped restrictions from the cyclic domino problem to L-triomino shaped restrictions.

\begin{figure}[ht]
\centering
\includegraphics[width=0.75 \textwidth]{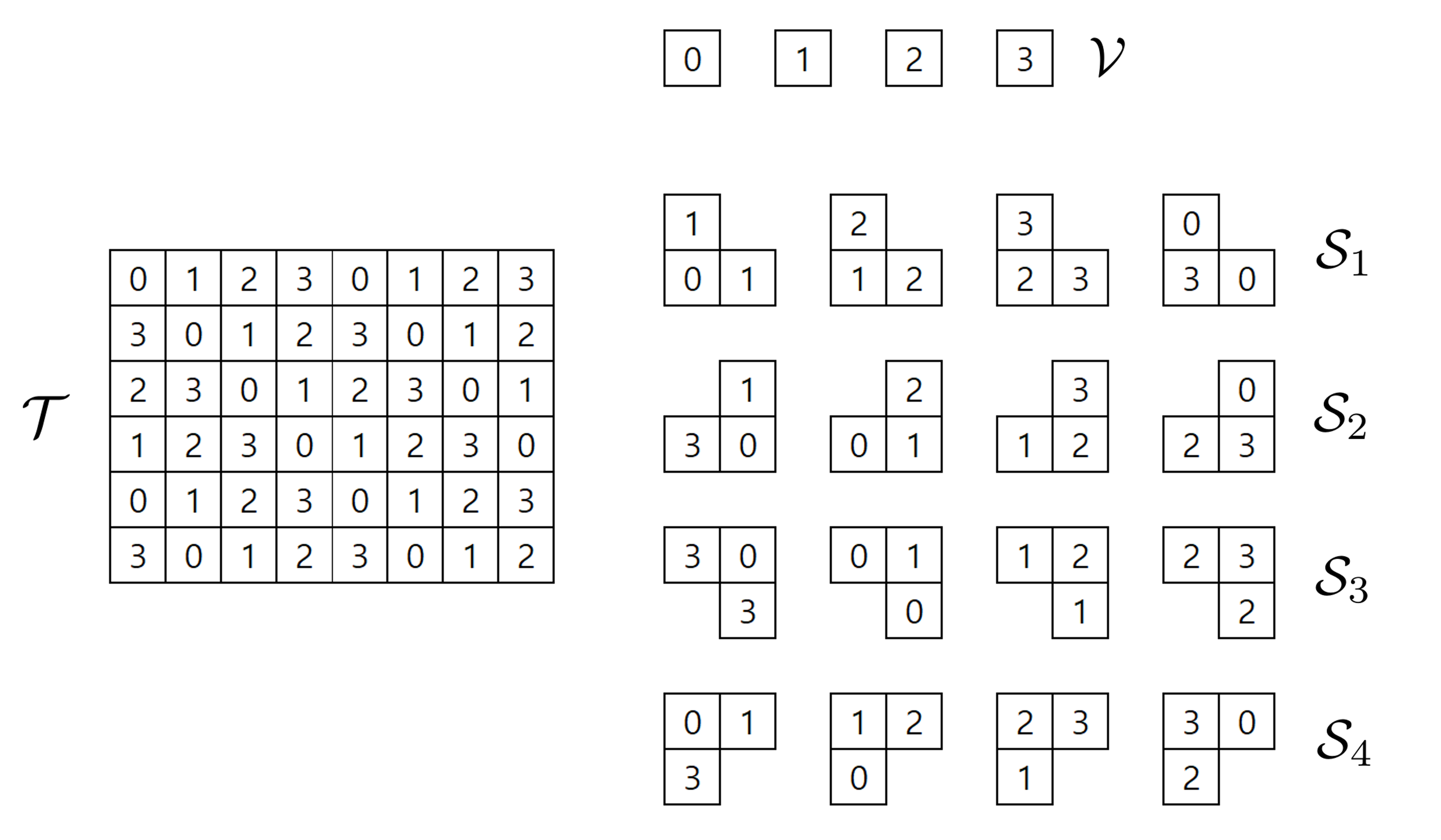}
\captionsetup{width=1 \textwidth}
\caption{An example of a cyclic triomino problem and its solution}
\label{fig:Figure8}
\end{figure}

\begin{theorem} %3.1
The cyclic triomino problem is undecidable.
\end{theorem}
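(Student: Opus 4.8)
The plan is to reduce the (undecidable) cyclic domino problem, or more directly Wang's domino problem, to the cyclic triomino problem. The cyclic domino problem itself is decidable, so the real source of undecidability must come from the extra expressive power of L-triomino restrictions over domino restrictions: a triomino rule $(\cT(s),\, \cT(s+u_i),\, \cT(s+u_{i+1})) \in \cS_i$ couples the value at $s$ with the values at \emph{two} neighbors simultaneously, and as $i$ ranges over $1,2,3,4$ these L-shapes sweep around every lattice point, so together they constrain all four neighbors of $s$ jointly. The cyclic constraint $(a,b,c)\in\cS_i \Rightarrow (a+k,b+k,c+k)\in\cS_i$ means the alphabet $\cV=\mathbb{Z}_n$ only records values up to a global shift, so I should encode a Wang tiling not in the absolute value $\cT(s)$ but in \emph{differences} $\cT(s+u_i)-\cT(s)$, which are shift-invariant.

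First I would fix a set of Wang tiles (equivalently, a domino set $(\cW,\cR_1,\cR_2)$) and let $N=|\cW|$. I would take $n$ large enough (say $n > 4N$ or similar) and design $\cV=\mathbb{Z}_n$ so that the "local slopes" $d_i(s) := \cT(s+u_i)-\cT(s) \bmod n$ take values in a bounded range, and so that the horizontal increment $d_1(s)$ plus the vertical increment $d_2(s)$ encode, respectively, the color on the east edge and the color on the north edge of the Wang tile placed at $s$. The key point is that consistency of these increments around a unit square — i.e.\ $d_1(s) + d_2(s+u_1) = d_2(s) + d_1(s+u_1)$ — is automatic from the definition of $\cT$, which is exactly the cocycle/telescoping identity that makes edge colors match up between horizontally and vertically adjacent tiles. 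Then I would build $\cS_1,\cS_2,\cS_3,\cS_4$ so that: (a) from the allowed triples at a corner, the pair of increments $(d_i(s), d_{i+1}(s))$ is forced to correspond to a legal Wang tile at $s$ (this is where I read off which tile sits at $s$ and insist its two relevant edges are compatible with the $\cR$-relations), and (b) the four L-shaped constraints around $s$ are mutually consistent, forcing the same "tile" to be read no matter which corner I look through. Conversely, given a valid Wang tiling, define $\cT$ by picking any value at the origin and extending by the prescribed increments along paths — well-defined precisely because the increments form a cocycle — giving a solution to the cyclic triomino problem.

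The main obstacle I expect is \textbf{(i)} making the increment encoding genuinely shift-invariant \emph{and} injective enough to recover the Wang tile: because $\cV=\mathbb{Z}_n$ wraps around, a value $d_i(s)$ near $n$ and near $0$ are identified, so I must choose $n$ and the encoding so that the "true" increments never come close to wrapping, or else carry enough redundancy (e.g.\ encode the tile index twice, via two different slopes) that the L-triomino constraints can detect and reject any configuration where wrap-around would corrupt the decoding; and \textbf{(ii)} ensuring the four constraint families $\cS_1,\dots,\cS_4$ are jointly satisfiable exactly when a Wang tiling exists — in particular that they do not accidentally over-constrain (killing all solutions) or under-constrain (admitting spurious "tilings" that don't come from Wang tiles). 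Handling \textbf{(ii)} cleanly is where the L-triomino shape pays off over the domino shape: a single L at $s$ sees both $\cT(s+u_i)$ and $\cT(s+u_{i+1})$, so I can demand that the \emph{ordered pair} of increments $(d_i(s),d_{i+1}(s))$ lie in an explicit finite "tile table" rather than only constraining one increment at a time, which is the feature the cyclic domino problem lacks and the reason it was decidable. I would verify both directions of the reduction — a Wang tiling yields a cyclic triomino solution via the cocycle construction, and any cyclic triomino solution yields a Wang tiling by reading tiles off the increments — and conclude undecidability from that of Wang's domino problem.
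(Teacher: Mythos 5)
Your plan has a genuine gap, and it sits exactly at the point you flag as ``automatic.'' The telescoping identity $d_1(s)+d_2(s+u_1)=d_2(s)+d_1(s+u_1)$ is automatic only for increments that are \emph{derived} from an existing $\cT$. In your converse direction you prescribe the increments first (as numeric codes of Wang edge colors read off a given Wang tiling) and then try to integrate them along paths; for that to be well defined the prescribed codes must sum to zero around every unit square, and a legal Wang tiling gives you no such identity. Concretely, if a unit square of the tiling has bottom horizontal color coded $1$, top horizontal color coded $2$, and equal codes on its two vertical edges, the alternating sum is $-1\neq 0$, so no potential $\cT$ realizes those increments and the direction ``Wang tiling $\Rightarrow$ cyclic triomino solution'' fails. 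This is not a technicality to be patched by choosing $n$ large or adding redundancy: insisting that the edge colors be a discrete gradient restricts you to coboundary colorings, which is precisely the degeneracy that makes the cyclic \emph{domino} problem decidable. The forward direction has a related problem: each $\cS_i$ at $s$ only constrains one adjacent pair of increments, so even with a ``tile table'' you get pairwise (2-consistent) constraints around a vertex that need not be jointly realized by a single Wang tile; and enriching the increment alphabet to carry the full tile identity runs straight back into the cocycle obstruction, because an increment on an edge is shared (with opposite sign) by its two endpoints.

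What the paper does instead supplies the idea your sketch is missing: rather than storing information in shift-invariant differences, it uses the L-shape to force a rigid \emph{phase} structure that manufactures local absolute references. The restriction sets are built from representatives $L=\{(w,0,0)\}$ and $K_i\supset L$, with $\cV=\mathbb{Z}_n$, $n\geq 2m+1$; Lemmas 3.2--3.4 show that in any solution (after a global shift) the board is a checkerboard in which half the cells carry the symbol $0$ and the other half carry values in $\{1,\dots,m\}$. The bound $n\geq 2m+1$ prevents wrap-around when identifying which representative of a cyclic class occurs, and the forced $0$'s adjacent to every value cell act as local origins, so the values on the $\sqrt{2}$-sublattice are effectively absolute and the sets $K_i\setminus L$ impose the relations $\cR_1,\cR_2$ of the (Greenfeld--Tao) domino problem on that sublattice. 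If you want to salvage your write-up, you need an analogous phase-breaking mechanism; the difference/cocycle encoding alone cannot carry the reduction.
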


To prove Theorem 3.1, we construct a cyclic triomino set encoding an arbitrary domino problem.

Let $\cR = (\cW , \, \cR_1, \, \cR_2)$ be a given domino set.
We may assume $\cW = \{ 1,\, 2,\, \dots ,\, m \}$ for some $m \in \mathbb{N}$. Let
\[
L = \{ (w,\, 0,\, 0) \mid w \in \cW \},
\]
\[
K_1 = L \cup \{ (0,\, b,\, a) \mid (a,\, b) \in \cR_1 \},
\]
\[
K_2 = L \cup \{ (0,\, a,\, b) \mid (a,\, b) \in \cR_2 \},
\]
\[
K_3 = L \cup \{ (0,\, a,\, b) \mid (a,\, b) \in \cR_1 \},
\]
\[
K_4 = L \cup \{ (0,\, b,\, a) \mid (a,\, b) \in \cR_2 \}.
\]

Here we have defined the ‘representatives’ $K_i$ of each cyclic restriction.
Although the restrictions themselves are cyclic, only $K_i$’s will be able to appear in the triomino board(up to shifting all numbers on the triomino board by an integer).
Now we define the cyclic triomino set $\cS = \left( \cV,\, \cS_1,\, \cS_2,\, \cS_3,\, \cS_4 \right)$ where
\[
\cV = \mathbb{Z}_n \quad (n \geq 2m + 1),
\]
\[
\cS_i = \{ (a + k,\, b + k,\, c + k) \mid (a,\, b,\, c) \in K_i,\, k \in \cV \}.
\]
Figure 9 shows an example of $\cS$ constructed from the domino set in Figure 5.

\begin{figure}[ht]
\centering
\includegraphics[width=1 \textwidth]{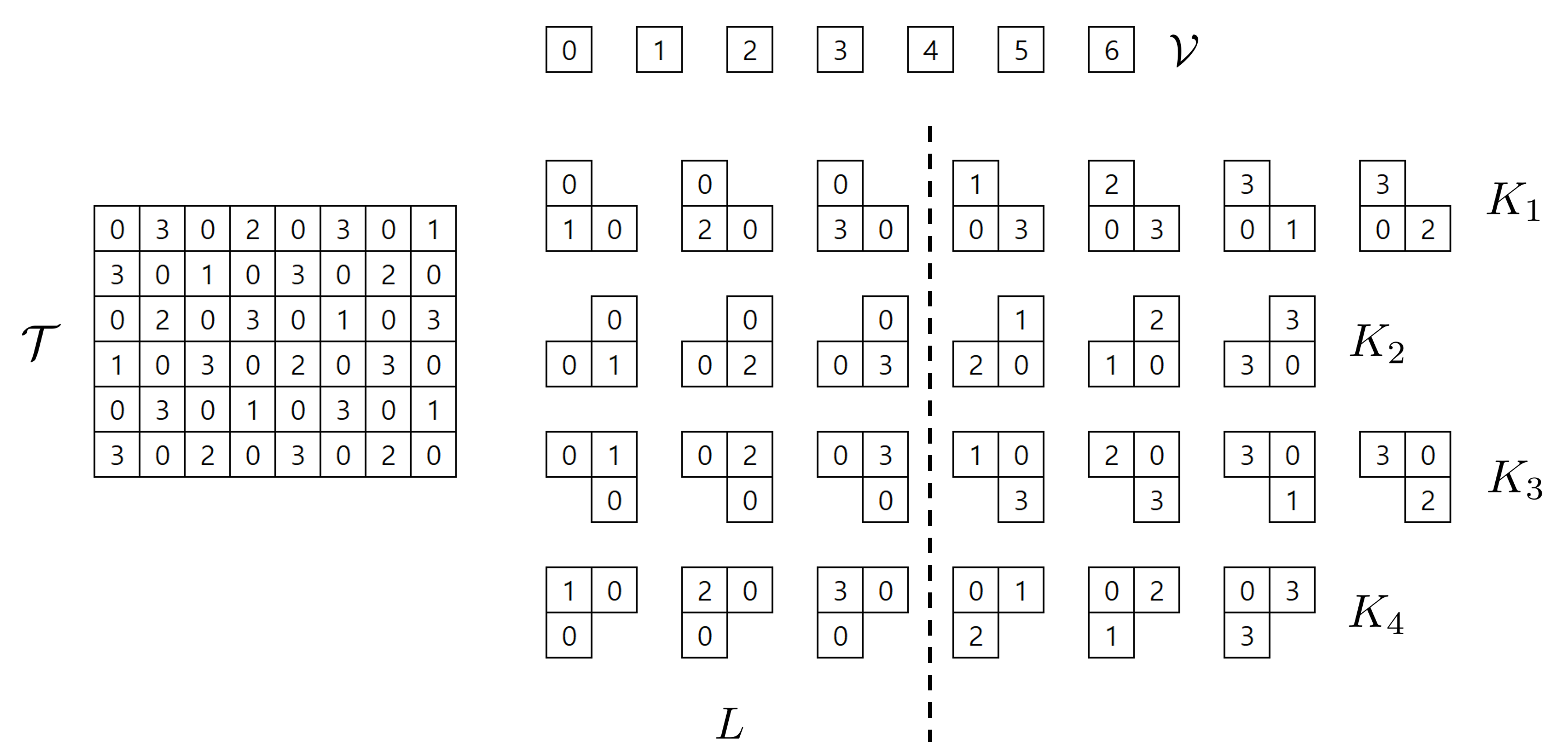}
\captionsetup{width=0.95 \textwidth}
\caption{A cyclic triomino set encoding the domino problem in Figure 5; only $K_i$ is shown from ${\cS}_i$.}
\label{fig:Figure9}
\end{figure}

Let $\cT$ be a solution to $\cS$-cyclic triomino problem.
For $s \in \mathbb{Z}^{2}$ and $i \in \mathbb{Z}$, define $p(s,\, i)$ and $q(s,\, i)$ as the statements
\[
\left( \cT(s),\, \cT(s + u_i),\, \cT(s + u_{i + 1}) \right) \in L,
\]
\[
\left( \cT(s),\, \cT(s + u_i),\, \cT(s + u_{i + 1}) \right) \in K_i \setminus L
\]
respectively.

$p(s,\, i)$ and $q(s,\, i)$ can be thought of as examining the L-triomino with position $s$ and orientation $i$.
Note that neither $p(s,\, i)$ nor $q(s,\, i)$ might be true, as the condition of $\cS$-cyclic triomino problem is that
\[
\left( \cT(s),\, \cT(s + u_i),\, \cT(s + u_{i + 1}) \right) \in {\cS_i}.
\]

\begin{lemma} %[3.2]
Let $s \in \mathbb{Z}^2$ be fixed. If $p(s,\,i)$ is true for some $1 \leq i \leq 4$, then it is true for every $1 \leq i \leq 4$.
\end{lemma}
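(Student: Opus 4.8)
The plan is to unwind the definitions so that $p(s,i)$ becomes a concrete arithmetic statement, isolate one elementary fact about residues modulo $n$, and then run a short propagation argument around the four neighbors of $s$.

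First I would record that, since $\cW=\{1,\dots,m\}$ and $n\ge 2m+1$, the triples in $L=\{(w,0,0)\mid w\in\cW\}$ are exactly those whose last two entries vanish and whose first entry lies in $\{1,\dots,m\}\subseteq\mathbb{Z}_n\setminus\{0\}$; consequently $p(s,i)$ holds if and only if $\cT(s)\in\{1,\dots,m\}$ and $\cT(s+u_i)=\cT(s+u_{i+1})=0$. I would also note the structural fact that for every $j$ each generator of $\cS_j$ is a $\cV$-translate of an element of $K_j$, and each element of $K_j\setminus L$ has the form $(0,p,q)$ with $p,q\in\cW=\{1,\dots,m\}$ (the two orderings $(0,b,a)$ and $(0,a,b)$ occurring in the definitions of $K_1,\dots,K_4$ are immaterial here). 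The one arithmetic observation that does the work: if $k\in\{1,\dots,m\}$ and $p\in\{1,\dots,m\}$, then $p+k\not\equiv 0\pmod n$, because $2\le p+k\le 2m<n$.

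Now fix $i_0$ with $p(s,i_0)$ true, so $\cT(s)\in\{1,\dots,m\}$ and $\cT(s+u_{i_0})=\cT(s+u_{i_0+1})=0$. Applying the triomino constraint for the orientation $i_0+1$ gives $(\cT(s),\cT(s+u_{i_0+1}),\cT(s+u_{i_0+2}))\in\cS_{i_0+1}$; write this triple as $(a+k,b+k,c+k)$ with $(a,b,c)\in K_{i_0+1}$. If $(a,b,c)=(0,p,q)\in K_{i_0+1}\setminus L$, then $a+k=k=\cT(s)\in\{1,\dots,m\}$ forces $k\in\{1,\dots,m\}$, whence $b+k=p+k\not\equiv 0$, contradicting $\cT(s+u_{i_0+1})=0$. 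So $(a,b,c)\in L$, i.e.\ $b=c=0$, and reading off the last coordinate yields $\cT(s+u_{i_0+2})=k=0$. Symmetrically, the constraint for orientation $i_0-1$ gives $(\cT(s),\cT(s+u_{i_0-1}),\cT(s+u_{i_0}))\in\cS_{i_0-1}$; here $\cT(s+u_{i_0})=0$ occupies the last slot, and the same dichotomy (using that $\cT(s)\in\{1,\dots,m\}$ sits in the first slot) again excludes the non-$L$ branch and forces $\cT(s+u_{i_0-1})=0$. Since $\{i_0,i_0+1,i_0+2,i_0+3\}$ exhausts $\mathbb{Z}/4$, we conclude $\cT(s+u_j)=0$ for every $j$, while $\cT(s)\in\{1,\dots,m\}$; hence $(\cT(s),\cT(s+u_i),\cT(s+u_{i+1}))=(\cT(s),0,0)\in L$ for every $i$, i.e.\ $p(s,i)$ holds for all $1\le i\le 4$.

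The step I expect to be the main obstacle — really the only subtle point — is making sure the non-$L$ branch of $\cS_j$ is genuinely excluded: one must observe that every $\cR$-generator contributes a $0$ in the first coordinate with both other coordinates in $\{1,\dots,m\}$, and then invoke $n\ge 2m+1$ to rule out the wrap-around $p+k\equiv 0\pmod n$ that would otherwise let such a generator impersonate an element of $L$. Once that is in place, the propagation around the four neighbors is pure bookkeeping with the cyclic conventions $u_{4q+r}=u_r$ and $\cS_{4q+r}=\cS_r$.
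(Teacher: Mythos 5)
Your proof is correct and follows essentially the same route as the paper: both apply the triomino constraint at an orientation adjacent to the known one, write the triple as a shift of a representative in $K_j$, exclude the $K_j\setminus L$ branch via the no-wrap-around consequence of $n\ge 2m+1$, and then use a known-zero coordinate to pin the shift at $0$. The only cosmetic difference is that the paper iterates the single implication $p(s,i)\to p(s,i+1)$ around the cycle, whereas you take one step forward and one step backward from $i_0$; this is just bookkeeping.
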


\begin{proof}
Since $p(s,\,i)$ and $p(s,\,i + 4)$ are identical statements, it is sufficient to show that $p(s,\,i) \to p(s,\,i + 1)$ for every $1 \leq i \leq 4$.

Suppose that $p(s,\,i)$ is true. By examining $L$, we see that
\[
1 \leq \cT(s) \leq m,\, \cT(s + u_{i + 1}) = 0. \qquad (\mathord{*})
\]

We also know that the following must hold:
\[
(\cT(s),\, \cT(s + u_{i + 1}),\, \cT(s + u_{i + 2})) \in {\cS_{i + 1}}
\]

Let $(a,\, b,\, c)$ be the representative of $(\cT(s),\, \cT(s + u_{i + 1}),\, \cT(s + u_{i + 2}))$ in $K_{i + 1}$. Then
\[
a - b = \cT(s) - \cT(s + u_{i + 1})
\]
holds (in $\cV$).
Combining this with ($\mathord{*}$), we get:
\[
1 \leq a - b \leq m
\]

Since we chose $\cV=\mathbb{Z}_n$ such that $n \geq 2m + 1$,  
$(a,\, b,\, c)$ fails to meet this condition if  $(a,\, b,\, c) \in K_{i + 1} \setminus L$.
Thus $(a,\, b,\, c) \in L$, and $\bigl(\cT(s),\, \cT(s + u_{i + 1}),\, \cT(s + u_{i + 2})\bigr)$ is equal to an element in $L$ shifted by some integer.  
Comparing the second element of the tuple, we see that every element in $L$ has a second element of $0$,  
which is also true for $\bigl(\cT(s),\, \cT(s + u_{i + 1}),\, \cT(s + u_{i + 2})\bigr)$.
Therefore, $\bigl(\cT(s),\, \cT(s + u_{i + 1}),\, \cT(s + u_{i + 2})\bigr) \in L$.
\end{proof}

\begin{lemma} %[3.3]
Let $s \in \mathbb{Z}^2$ be fixed. If $q(s,\,i)$ is true for some $1 \leq i \leq 4$, then it is true for every $1 \leq i \leq 4$.
\end{lemma}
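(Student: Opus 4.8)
The plan is to prove Lemma 3.3 as the exact dual of Lemma 3.2, with the role of $L$ played by $K_{i+1}\setminus L$. Exactly as there, $q(s,i)$ and $q(s,i+4)$ are literally the same statement, so it suffices to establish $q(s,i)\to q(s,i+1)$ for each $1\le i\le 4$; chaining these four implications around the cycle of orientations then propagates the truth of $q(s,\cdot)$ to all of $1\le i\le 4$.

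First I would unpack $q(s,i)$. Inspecting $K_1\setminus L,\dots,K_4\setminus L$, every element has first coordinate $0$ and has its remaining two coordinates lying in $\cW=\{1,\dots,m\}$; since the triple examined by $q(s,i)$ is $\bigl(\cT(s),\cT(s+u_i),\cT(s+u_{i+1})\bigr)$, the hypothesis $q(s,i)$ gives
\[
\cT(s)=0,\qquad 1\le \cT(s+u_{i+1})\le m .
\]
Next I would apply the cyclic triomino condition in orientation $i+1$ at $s$, namely $\bigl(\cT(s),\cT(s+u_{i+1}),\cT(s+u_{i+2})\bigr)\in\cS_{i+1}$, and choose a representative $(a,b,c)\in K_{i+1}$ with $\bigl(\cT(s),\cT(s+u_{i+1}),\cT(s+u_{i+2})\bigr)=(a+k,b+k,c+k)$ for some $k\in\cV$. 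Then $b-a=\cT(s+u_{i+1})-\cT(s)$, which by the displayed line lies in $\{1,\dots,m\}$. Now split on the type of $(a,b,c)$: if $(a,b,c)\in L$ then $(a,b,c)=(w,0,0)$ with $1\le w\le m$, so $b-a=-w$, whose residue in $\{0,\dots,n-1\}$ is $n-w\in\{n-m,\dots,n-1\}$; this set is disjoint from $\{1,\dots,m\}$ precisely because $n\ge 2m+1$, a contradiction. Hence $(a,b,c)\in K_{i+1}\setminus L$, so its first coordinate $a$ is $0$, and combined with $\cT(s)=a+k=0$ this forces $k=0$. Therefore $\bigl(\cT(s),\cT(s+u_{i+1}),\cT(s+u_{i+2})\bigr)=(a,b,c)\in K_{i+1}\setminus L$, i.e. $q(s,i+1)$ holds.

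I do not expect a real obstacle: the mechanism is identical to Lemma 3.2, the hypothesis $n\ge 2m+1$ again doing the sole work of keeping the ``$L$-type'' first-minus-second differences $\{1,\dots,m\}$ and the ``$(K_{i+1}\setminus L)$-type'' differences $\{-m,\dots,-1\}$ from colliding modulo $n$. The one point needing a little care is bookkeeping which coordinate slot of each $K_i\setminus L$ stores the $\cR$-entries, since the second and third slots are swapped between $K_1,K_4$ and $K_2,K_3$; I would handle this uniformly by noting that the only facts used are that every element of $K_{i}\setminus L$ has first coordinate $0$ and has its second and third coordinates in $\cW$, both of which hold in all four cases (or, failing that, one simply checks the four cases separately).
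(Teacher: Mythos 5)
Your proof is correct and follows essentially the same route as the paper, which proves Lemma 3.3 by repeating the Lemma 3.2 argument with $\cT(s)=0$, $1\le\cT(s+u_{i+1})\le m$, and a comparison of the first coordinate at the end; your explicit case split on $(a,b,c)\in L$ versus $K_{i+1}\setminus L$ and the deduction $k=0$ is just a fleshed-out version of that same argument.
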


\begin{proof}
We repeat the argument in lemma 3.2 by noticing that
\[
1 \leq \cT(s + u_{i + 1}) \leq m,\, \cT(s) = 0
\]
and comparing the first element of the tuple at the end of the proof.
\end{proof}

\begin{lemma} %[3.4]
If $p(s,\,1)$ is true, $q(s + u_j,\,1)$ is true for every $1 \leq j \leq 4$. Likewise, if $q(s,\,1)$ is true, $p(s + u_j,\,1)$ is true for every $1 \leq j \leq 4$.
\end{lemma}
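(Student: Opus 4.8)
The plan is to handle both halves by the same device: use Lemmas 3.2 and 3.3 to transport the desired conclusion at orientation $1$ from a conveniently chosen orientation, and to control that orientation by a coordinate comparison exploiting the slack $n\geq 2m+1$. First I would unpack the hypotheses. If $p(s,1)$ holds, Lemma 3.2 gives $p(s,i)$ for every $i$; since every element of $L$ has the form $(w,0,0)$ with $w\in\{1,\dots,m\}$, this says $\cT(s)\in\{1,\dots,m\}$ while $\cT(s+u_i)=0$ for all $i$. Dually, if $q(s,1)$ holds, Lemma 3.3 gives $q(s,i)$ for all $i$; each element of $K_i\setminus L$ has first coordinate $0$ and remaining coordinates in $\{1,\dots,m\}$, so $\cT(s)=0$ and $\cT(s+u_i)\in\{1,\dots,m\}$ for all $i$.

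Now fix a neighbor $t=s+u_j$. The geometric observation is that $u_{j+2}=-u_j$, so $s=t+u_{j+2}$; hence $s$ is the far cell of the L-triomino at $t$ with orientation $j+1$, and the governing tuple is $\bigl(\cT(t),\,\cT(t+u_{j+1}),\,\cT(s)\bigr)\in\cS_{j+1}$. Write this tuple as a shift $(a+k,\,b+k,\,c+k)$ of its representative $(a,b,c)\in K_{j+1}$ and split on whether $(a,b,c)\in L$ or $(a,b,c)\in K_{j+1}\setminus L$. For the first half we know $\cT(t)=0$ and $\cT(s)\in\{1,\dots,m\}$: a representative $(w,0,0)\in L$ would force, via the first coordinate $w+k=\cT(t)=0$, that $k=-w\in\{n-m,\dots,n-1\}$, and via the third coordinate $k=\cT(s)\in\{1,\dots,m\}$ --- a contradiction, since $n\geq2m+1$ makes these ranges disjoint. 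So the representative lies in $K_{j+1}\setminus L$; its first coordinate $0$ matched against $\cT(t)=0$ forces $k=0$, so the tuple itself lies in $K_{j+1}\setminus L$, i.e.\ $q(t,j+1)$ holds, and Lemma 3.3 upgrades this to $q(t,1)$, that is, $q(s+u_j,1)$. For the second half we know $\cT(t)\in\{1,\dots,m\}$ and $\cT(s)=0$, and the mirror argument applies: a representative in $K_{j+1}\setminus L$ has third coordinate in $\{1,\dots,m\}$, so the third coordinate $c+k=\cT(s)=0$ forces $\cT(t)=k=-c\in\{n-m,\dots,n-1\}$, impossible; hence the representative lies in $L$, its last coordinate $0$ matched against $\cT(s)=0$ forces $k=0$, the tuple lies in $L$, so $p(t,j+1)$ holds, and Lemma 3.2 gives $p(s+u_j,1)$.

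The only substantive ingredient is the bound $n\geq 2m+1$: it is precisely what prevents a residue known to lie in $\{1,\dots,m\}$ from also being the negative of something in $\{1,\dots,m\}$, which is what eliminates the unwanted representative in each case. Everything else is clerical --- reading the orientation indices $j+1$ and $j+2$ modulo $4$ (so that, e.g., $q(t,5)$ is read as $q(t,1)$), and checking directly from the explicit forms of $K_1,\dots,K_4$ that the relevant coordinate comparison pins $k$ to $0$ in all four cases. I expect this indexing and case-matching bookkeeping, rather than any conceptual point, to be the part most prone to slips.
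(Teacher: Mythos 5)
Your proposal is correct and matches the paper's argument: the paper likewise transports $p(s,1)$ (resp.\ $q(s,1)$) to all orientations via Lemmas 3.2--3.3, examines the tuple $\bigl(\cT(s+u_j),\,\cT(s+u_j+u_{j+1}),\,\cT(s)\bigr)\in\cS_{j+1}$, rules out the unwanted representative using $n\geq 2m+1$, and pins the shift to $0$ by comparing the first (resp.\ third) coordinate before invoking Lemma 3.3 (resp.\ 3.2) to return to orientation $1$. You have simply written out explicitly the steps the paper summarizes as ``repeat the argument in Lemma 3.2.''
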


\begin{proof}
Suppose that $p(s,\,1)$ is true. By Lemma 3.2, $p(s,\,j)$ is true for every $j$.
We can repeat the argument in Lemma 3.2 by noticing that
\[
1 \leq \cT(s) \leq m,\, \cT(s + u_j) = 0,
\]
\[
(\cT(s + u_j),\, \cT(s + u_j + u_{j + 1}),\, \cT(s)) \in {\cS_{j + 1}}
\]
and comparing the first element of the tuple at the end of the proof.
This shows that $q(s + u_j, \, j + 1)$ is true, from which we obtain that $q(s + u_j, \,1)$ is true by Lemma 3.3.

Now suppose that $q(s,\,1)$ is true. By Lemma 3.3, $q(s,\,j)$ is true for every $j$.
We can repeat the argument in Lemma 3.2 by noticing that
\[
1 \leq \cT(s + u_j) \leq m,\, \cT(s) = 0,
\]
\[
(\cT(s + u_j),\, \cT(s + u_j + u_{j + 1}),\, \cT(s)) \in {\cS_{j + 1}}
\]
and comparing the third element of the tuple at the end of the proof.
This shows that $p(s + u_j,\, j + 1)$ is true, from which we obtain that $p(s + u_j,\,1)$ is true by Lemma 3.2.
\end{proof}

By combining the properties from the above 3 lemmas, we obtain the general structure of $\cT$.
This allows us to prove Lemma 3.5.

\begin{lemma} %[3.5]
$\cS$-cyclic triomino problem is solvable if and only if $\cR$-domino problem is solvable.
\end{lemma}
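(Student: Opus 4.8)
The plan is to establish both directions of the equivalence, with the forward direction (\,$\cS$-cyclic triomino solvable $\Rightarrow$ $\cR$-domino solvable\,) being where all the structural work from Lemmas 3.2--3.4 gets used. First I would fix a solution $\cT$ to the $\cS$-cyclic triomino problem and argue that for every $s \in \mathbb{Z}^2$, at least one of $p(s,\,i)$, $q(s,\,i)$ holds for each $i$: the condition $(\cT(s),\,\cT(s+u_i),\,\cT(s+u_{i+1})) \in \cS_i$ means this tuple is a shift of some representative in $K_i = L \cup (K_i \setminus L)$, so it is a shift of an element of $L$ or of $K_i \setminus L$; a shift of an element of $L$ has two equal entries forced by the third, and one checks that being a shift of $L$ is exactly $p(s,\,i)$ (and similarly $q(s,\,i)$ for $K_i\setminus L$, using $n \geq 2m+1$ so the two families of shifts are disjoint). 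By Lemmas 3.2 and 3.3, the truth of $p(s,\,\cdot)$ or $q(s,\,\cdot)$ is independent of $i$, so each lattice point $s$ is cleanly labelled as a ``$p$-point'' or a ``$q$-point''. By Lemma 3.4, moving to any of the four neighbours toggles the label, so the $p$-points and $q$-points form the two colour classes of the checkerboard colouring of $\mathbb{Z}^2$ (up to swapping which class is which).

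Next I would extract the domino solution. On $p$-points we have $1 \leq \cT(s) \leq m$ and on $q$-points $\cT(s) = 0$ (after possibly translating all values of $\cT$ by a fixed integer in $\mathbb{Z}_n$ so that the representatives are literally the $K_i$; this is where the ``up to shifting'' remark is cashed in). Define $\cU \colon \mathbb{Z}^2 \to \cW$ on the $p$-sublattice by $\cU(s) = \cT(s)$, and then I need to read the $\cR_1$- and $\cR_2$-adjacencies off of the L-triomino constraints. Looking at $p(s,\,1)$: the tuple $(\cT(s),\,\cT(s+u_1),\,\cT(s+u_2)) \in \cS_1$ with the first entry in $\{1,\dots,m\}$ and the others $0$ is a shift of an element of $L$, which is consistent but vacuous; the content comes from the $q$-point $s+u_1$, where $q(s+u_1,\,i)$ holds and the relevant tuple lands in some $K_i \setminus L$, whose entries encode a pair $(a,b) \in \cR_1$ or $\cR_2$ recording the $\cT$-values at the two $p$-point neighbours of $s+u_1$. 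Carefully matching the four shapes $K_1,\dots,K_4$ to the four directions $u_1,\dots,u_4$, the pair of $\cT$-values at two $p$-points that are at distance $2$ apart horizontally must lie in $\cR_1$, and vertically in $\cR_2$. Since the $p$-sublattice is $\{s : s_1 + s_2 \text{ even}\}$, rescaling by the linear map sending this sublattice to $\mathbb{Z}^2$ turns $\cU$ into a genuine solution of the $\cR$-domino problem.

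For the converse, given a solution $\cT_0 \colon \mathbb{Z}^2 \to \cW$ of the $\cR$-domino problem, I would build $\cT$ by placing the values $\cT_0$ on one checkerboard class (suitably spread out on the $p$-sublattice) and the constant $0$ on the other class, then verify directly that every L-triomino $(\cT(s),\,\cT(s+u_i),\,\cT(s+u_{i+1}))$ lies in the appropriate $\cS_i$: when the corner $s$ is a $p$-point the tuple is $(\cT_0(\text{pt}),\,0,\,0) \in L \subseteq K_i \subseteq \cS_i$, and when $s$ is a $q$-point the tuple is $(0,\,a,\,b)$ or $(0,\,b,\,a)$ with $(a,b)$ an $\cR_1$- or $\cR_2$-adjacent pair of $\cT_0$-values, which is exactly an element of $K_i \setminus L$ by construction. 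This direction is essentially bookkeeping once the checkerboard picture is in place.

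The main obstacle is the forward direction's geometry: one must pin down precisely which of the four triomino shapes corresponds to a horizontal $\cR_1$-constraint versus a vertical $\cR_2$-constraint, keeping track of orientations and of the fact that the $p$-points sit at even-sum coordinates so the domino adjacencies are at lattice distance $2$. The definitions of $K_1,\dots,K_4$ were chosen (note $K_1,K_3$ both come from $\cR_1$ and $K_2,K_4$ from $\cR_2$, with the two entries swapped between the pair) exactly so that going around a $q$-point in all four L-orientations recovers both the ``rightward'' and ``leftward'' (resp. ``up'' and ``down'') copies of the same domino rule; verifying this consistency, and that it produces a well-defined $\cU$ satisfying both $\cR_1$ and $\cR_2$, is the crux. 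Everything else reduces to the three preceding lemmas plus the checkerboard observation from Lemma 3.4.
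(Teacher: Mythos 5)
Your overall strategy is the paper's: normalize $\cT$ by a global shift, use Lemmas 3.2--3.4 to split $\mathbb{Z}^2$ into a checkerboard of $p$-points and $q$-points, read a solution of the $\cR$-domino problem off the non-zero sublattice, and prove the converse by direct construction. However, two steps as you state them do not work. First, $p(s,\,i)$ and $q(s,\,i)$ are defined by \emph{literal} membership in $L$ and $K_i \setminus L$, not membership up to a cyclic shift; the condition $(\cT(s),\,\cT(s+u_i),\,\cT(s+u_{i+1})) \in \cS_i$ only says the tuple is \emph{some} shift of a representative, which is vacuously true and gives no pointwise dichotomy. Your claim that ``being a shift of $L$ is exactly $p(s,\,i)$'' is false (a shift of $(w,\,0,\,0)$ by $k \neq 0$ is not in $L$), and a single global translation of $\cT$ can only force the tuple at \emph{one} base point to be a representative. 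The correct order, which is the paper's, is: normalize so that $(\cT(0,0),\,\cT(1,0),\,\cT(0,1)) \in K_1$, conclude that $p((0,0),\,1)$ or $q((0,0),\,1)$ holds there, and only then propagate to all $s$ and all $i$ via Lemmas 3.2--3.4.

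Second, the extraction step misidentifies the geometry. An L-triomino anchored at a $q$-point $s$ covers the two $p$-points $s+u_i$ and $s+u_{i+1}$, which are \emph{diagonal} neighbours at distance $\sqrt{2}$; no triomino ever relates two $p$-points lying two apart horizontally or vertically, so your statement that pairs at horizontal distance $2$ must lie in $\cR_1$ and at vertical distance $2$ in $\cR_2$ is not what the encoding enforces. In fact $K_1\setminus L$ and $K_3\setminus L$ constrain pairs of $\cT$-values along one diagonal direction of the non-zero sublattice (e.g.\ from $s+u_2$ to $s+u_1$), while $K_2\setminus L$ and $K_4\setminus L$ constrain the other diagonal direction, each appearing twice for consistency of the two orientations. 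The $\cR$-domino solution is then obtained by the $45^\circ$ change of basis identifying $e_1$ and $e_2$ with these two diagonal directions; the ``rescale the even sublattice to $\mathbb{Z}^2$'' step applied to axis-aligned distance-$2$ constraints, as you describe it, would not yield the domino conditions. Both defects are repairable, and after repair your argument coincides with the paper's proof.
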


\begin{proof}
Suppose that $\cS$-cyclic triomino problem is solvable with solution $\cT$.
Consider the following 3-tuple:
\[
t = (\cT((0,\,0)),\, \cT((1,\,0)),\, \cT((0,\,1)))
\]

We know that $t \in {\cS_1}$.
Since $\cT^{\prime}(s) = \cT(s) + k$ gives a solution for any $k \in \cV$, we can choose $\cT$ such that $t \in K_1$.

Now we see that either $p((0,\,0),\,1)$ or $q((0,\,0),\,1)$ is true.
By using Lemmas 3.2 -- 3.4, the following is obtained for $x,\, y,\, i \in \mathbb{Z}$:
\[
\begin{aligned}
x + y \equiv \delta &\pmod{2} \to p((x,\,y),\,i) \\
x + y \equiv 1 - \delta &\pmod{2} \to q((x,\,y),\,i)
\end{aligned}
\]
where $\delta \in \{0,\,1\}$ is a constant. From this we get:
\[
\begin{aligned}
x + y \equiv \delta &\pmod 2 \rightarrow 1 \le \cT(x, y)) \le m \\
x + y \equiv 1 - \delta &\pmod 2 \rightarrow \cT((x, y)) = 0
\end{aligned}
\]

Thus, the non-zero values on the triomino board form a lattice of side length $\sqrt{2}$.
By observing $K_i \setminus L$, we can see that $K_i \setminus L$ enforces the restrictions of $\cR$ on this lattice.
By taking the values on this lattice, we obtain a solution to $\cR$-domino\, problem.

If $\cR$-domino problem is solvable, it is easy to verify that a solution to $\cS$-cyclic triomino problem exists in the form of the above configuration.
Therefore, $\cS$-cyclic triomino problem is solvable if and only if $\cR$-domino problem is solvable.
\end{proof}

From Lemma 3.5, it follows that the cyclic triomino problem is undecidable.
This proves Theorem 3.1.

\section{Encoding the cyclic triomino problem with polycubes} %chapter4

Now that we have the results of Theorem 2.5 and Theorem 3.1,
we are ready to prove the undecidability of tiling $\mathbb{Z}^3$ with a set of two connected polycubes.

\begin{theorem}
It is undecidable whether a set of two connected polycubes can tile $\mathbb{Z}^3$.
\end{theorem}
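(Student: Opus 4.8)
The plan is to reduce the $\cS$-cyclic triomino problem to the tiling problem for two connected polycubes, using Theorem 3.1 for undecidability and Theorem 2.5 to pass from disconnected tiles to connected ones. Concretely, given an arbitrary cyclic triomino set $\cS = (\cV, \cS_1, \cS_2, \cS_3, \cS_4)$ with $\cV = \mathbb{Z}_n$, I would construct two polycubes $A$ and $B$ (allowing them to be disconnected at this stage) so that $\{A, B\}$ tiles $\mathbb{Z}^3$ if and only if the $\cS$-cyclic triomino problem is solvable. Applying Theorem 2.5 then produces two connected polycubes $A', B'$ whose tilings are in one-to-one correspondence with those of $\{A, B\}$; since the cyclic triomino problem is undecidable, so is the two-polycube tiling problem.

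For the encoding I would let copies of a ``cell tile'' $A$ correspond to the cells of the triomino board $\mathbb{Z}^2$: a copy placed over lattice column $(x, y)$ carries a $z$-offset that encodes the value $\cT(x, y) \in \mathbb{Z}_n$, mirroring the $1 \times 1 \times n$ cuboid picture of Section 3 but decorated with bumps and dents. The cyclic nature of such an encoding — translating a tile along the $z$-axis leaves its shape unchanged — is exactly why $\cV$ was taken to be $\mathbb{Z}_n$ and why the triomino (rather than domino) restrictions are the right target. The crucial geometric feature is that at each corner where the cells $s$, $s + u_i$ and $s + u_{i+1}$ meet, the interlocking bumps and dents of the three surrounding $A$-copies admit a consistent local completion precisely when $(\cT(s), \cT(s + u_i), \cT(s + u_{i+1})) \in \cS_i$; this is where the ternary, L-shaped constraints are enforced. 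The second polycube $B$ is a filler occupying the complement of the $A$-copies inside each fundamental region, shaped so that $B$ alone cannot tile $\mathbb{Z}^3$ and so that the only joint tiling by $A$ and $B$ is the intended grid-of-columns configuration.

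With the construction in place, the correspondence has two directions. Given a tiling of $\mathbb{Z}^3$ by $\{A, B\}$, I would first argue — using the forced interlocking and the impossibility of large $B$-only regions — that the $A$-copies are organized into columns indexed by $\mathbb{Z}^2$, each with a well-defined $z$-offset modulo $n$; reading off these offsets defines a map $\cT : \mathbb{Z}^2 \to \mathbb{Z}_n$, and the corner compatibility conditions say exactly that $\cT$ solves the $\cS$-cyclic triomino problem. Conversely, from a solution $\cT$ one places the $A$-copies at the prescribed $z$-offsets and checks that the complement is tiled by translates of $B$, so $\{A, B\}$ tiles $\mathbb{Z}^3$. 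Theorem 2.5 then transfers everything to connected polycubes, completing the proof.

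The main obstacle is the geometric design of $A$ and $B$ together with the rigidity argument: one must choose the bumps and dents so that (i) copies of $A$ are forced onto the intended lattice in the $xy$-directions and onto period-$n$ offsets in the $z$-direction, (ii) the only admissible local configurations at each corner are exactly those recorded by $\cS_i$, with no spurious gluings, and (iii) the leftover region forms a single polycube $B$ that cannot tile on its own, so the problem is not rendered trivially decidable. Achieving faithfulness of the encoding, a forced global grid structure, and a clean two-tile complement simultaneously — while keeping both tiles finite — is the technical heart of the argument; the reduction itself and the appeals to Theorems 2.5 and 3.1 are then routine.
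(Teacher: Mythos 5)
Your overall reduction skeleton coincides with the paper's: encode an arbitrary cyclic triomino set $\cS$ by two (initially disconnected) polycubes so that a tiling of $\mathbb{Z}^3$ exists iff the $\cS$-cyclic triomino problem is solvable, then invoke Theorem 2.5 to replace them by two connected polycubes and Theorem 3.1 for undecidability. However, what you defer as ``the technical heart'' is exactly the content of the paper's proof, and your sketch of it contains a real gap rather than a routine omission. You propose that bumps and dents on the cell tile $A$ should be chosen so that, at each corner where $s$, $s+u_i$, $s+u_{i+1}$ meet, a consistent local completion exists precisely when the triple lies in $\cS_i$. But with translated copies of a single tile, shape decorations interact pairwise and cyclically; this is precisely the obstruction discussed in Section 3.1 (it is why the cyclic \emph{domino} problem is decidable and why the ternary triomino constraints are needed in the first place), and it is not explained how pairwise bump/dent matching could ever impose a genuinely ternary condition, nor how one rules out spurious gluings. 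The paper resolves this with a specific mechanism you do not supply: one-dimensional blockers $\alpha,\beta,\gamma$ interlaced into towers $\alpha_T,\beta_T,\gamma_T$ (scaled by $n$ with $\gcd(n,6)=1$), such that the second tile, the filler $O\times\{0,n\}$, can tile the leftover channel except exactly when the three towers' offsets satisfy $a\equiv b\equiv c \pmod n$. Each brick carries, for every forbidden representative triple in $\overline{K_i}$, three towers --- two of which protrude into the pole channels of the neighboring bricks in the $+x$ and $+y$ directions --- so that the \emph{second} tile's ability to complete the verification, not the bricks' mutual interlocking, is what enforces $\cS_i$.

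The rigidity side is likewise nontrivial and is handled in the paper by two further ingredients you only postulate: Lemma 4.2 (the filler cannot tile any region containing a translate of $\{-1,0,1\}^2\times\{0\}$, which forces the bricks to abut with no gaps, hence the columns-with-offsets structure and a well-defined $\cT:\mathbb{Z}^2\to\mathbb{Z}_n$), and the scaling by $3$ plus the specific bumps and dents of the 3-brick, which force the vertical period $6n$, the alignment of brick edges, and $z$-displacements in $6\mathbb{Z}$ between adjacent columns (Lemma 4.4). Without an explicit construction playing the role of the blocker/tower verification and without these rigidity lemmas, the proposal is a correct statement of the strategy but not a proof: the step ``choose bumps and dents so that (i)--(iii) hold'' is exactly what must be demonstrated, and it is not clear that your corner-interlocking variant can be made to work at all, whereas the paper's filler-verification mechanism is what makes the ternary, cyclic constraints encodable with only two tiles.
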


In order to prove Theorem 4.1, we will construct two disconnected polycubes encoding an arbitrary cyclic triomino set.
These can then be turned into connected polycubes using Theorem 2.5.

\subsection{The blockers and the towers} %chapter 4.1

The \textbf{blockers} are 1-dimensional tiles which provide a way to enforce the 3-way matching rules of the cyclic triomino problem.
There are three types of blockers, each with an ‘on’ and an ‘off’ state(Table~\ref{tab:blockers}):

\begin{table}[h]
\centering
\begin{tabular}{|c|c|c|}
\hline
 & off & on \\
\hline
type 1 & $\alpha_0 = \varnothing$ & $\alpha_1 = \{0,\,5\}$ \\
\hline
type 2 & $\beta_0 = \{1,\,4\}$ & $\beta_1 = \varnothing$ \\
\hline
type 3 & $\gamma_0 = \varnothing$ & $\gamma_1 = \{2,\,3\}$ \\
\hline
\end{tabular}
\caption{Notation of blockers}
\label{tab:blockers}
\end{table}

For $i, \, j, \, k \in \{0, \, 1\}$, consider tiling the set
\[
S = \mathbb{Z} \setminus 6\mathbb{Z} \oplus (\alpha_i \cup \beta_j \cup \gamma_k)
\]
with the tile
\[
D = \{0,\,1\}.
\]

This is equivalent to tiling $\mathbb{Z}$ while $\alpha_i,\,\beta_j$ and $\gamma_k$ have already been placed with a period of $6$.
By checking every combination of $(i,\,j,\,k)$, we see that $D$ can tile $S$ except for the case $(i,\,j,\,k) = (1,\,1,\,1)$(Figure 10).
We call this process of filling in the leftover gaps from the blockers a \textbf{verification}.

\begin{figure}[ht]
\centering
\includegraphics[width=1 \textwidth]{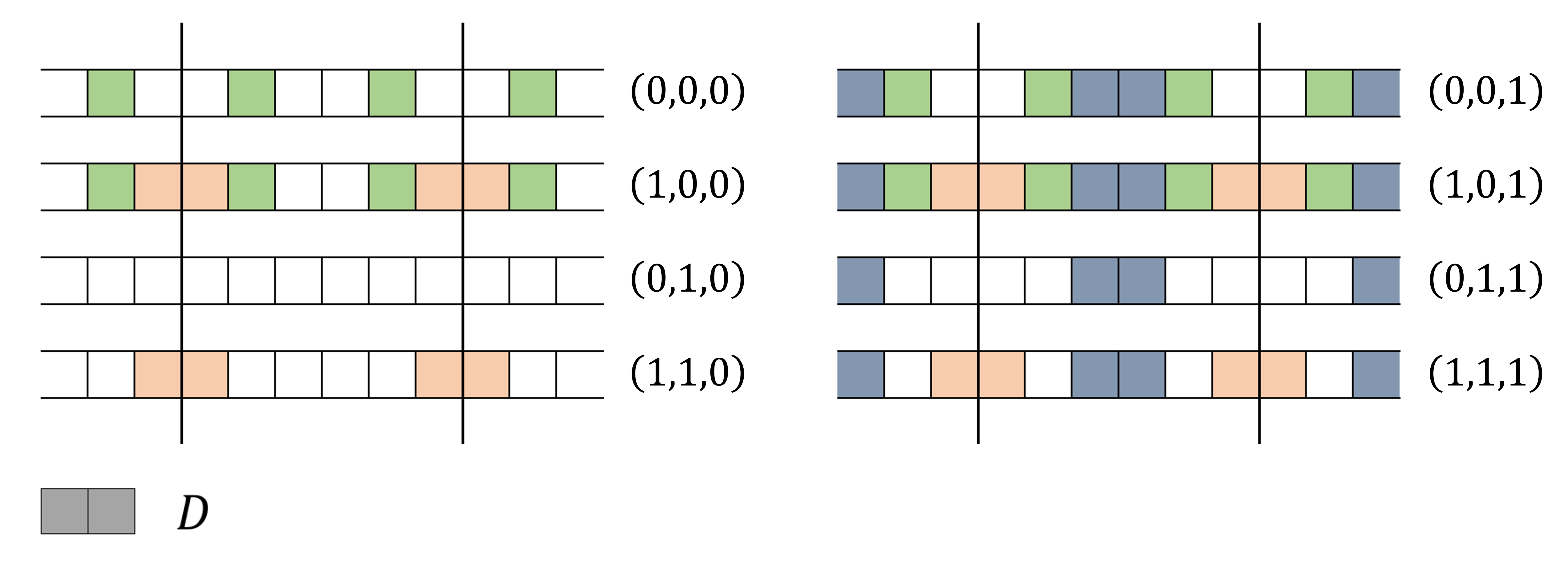}
\captionsetup{width=1 \textwidth}
\caption{$S$ for every choice of $(i, \, j, \, k)$; $S$ corresponds to empty squares.}
\label{fig:Figure10}
\end{figure}

Let $n$ be a positive integer that satisfies $\gcd(n,\,6)=1$.
The tiles constructed from this point will depend on the choice of $n$.
These tiles will later be used to encode the cyclic triomino set $(\cV,\,\cS_1,\,\cS_2,\,\cS_3,\,\cS_4)$ where $\cV=\mathbb{Z}_n$.

The \textbf{towers} are 1-dimensional tiles composed of blockers.
While single blockers forbid a specific triple of on/offs, the towers forbid specific triples of positions ranging from $0$ to $n-1$.

We define the tower $\alpha_T$ as
\[
\alpha_T = \bigcup_{i=0}^{n-1} \left( n \alpha_{\delta(i)} + 6i \right)
\]
where $\delta(i)=1$ if $i=0$ and $0$ otherwise. Similarly:
\[
\beta_T = \bigcup_{i=0}^{n-1} \left( n \beta_{\delta(i)} + 6i \right), \quad
\gamma_T = \bigcup_{i=0}^{n-1} \left( n \gamma_{\delta(i)} + 6i \right)
\]

Intuitively, $\alpha_T$ could be seen as a layout of $\alpha_1 \alpha_0 \alpha_0 \cdots \alpha_0$ but with each blocker interlacing.
No two blockers in $\alpha_{T}$ can overlap because we chose $n$ so that $\gcd(n,\,6)=1$.

Since the blockers have been scaled by $n$, we assume that the towers are placed with a period of $6n$.
We also assume that the towers can only be translated by an element of $6\mathbb{Z}$.
For $0\leq a,\,b,\,c \leq n-1$, consider tiling the set
\[
S' = \mathbb{Z} \setminus 6n\mathbb{Z} \oplus \left( (\alpha_T + 6a) \cup (\beta_T + 6b) \cup (\gamma_T + 6c) \right)
\]
with the tile
\[
nD = \{0, \, n\}
\]
(Figure 11). Notice that
\[
\begin{aligned}
6n\mathbb{Z} \oplus \left( \alpha_{T} + 6a \right)
&= 6n\mathbb{Z} \oplus \left( \bigcup_{i = a}^{n - 1 + a} \left( n\alpha_{\delta(i - a)} + 6i \right) \right) \\
&= \bigcup_{i = a}^{n - 1 + a} \left( \left( 6n\mathbb{Z} + 6i \right) \oplus n\alpha_{\delta(i - a)} \right)
= \bigcup_{i = 0}^{n - 1} \left( \left( 6n\mathbb{Z} + 6i \right) \oplus n\alpha_{\delta(i - a)} \right)
\end{aligned}
\]
which means
\[
\begin{aligned}
S' &= \mathbb{Z} \setminus \bigcup_{i = 0}^{n - 1} 
\left( \left( 6n\mathbb{Z} + 6i \right) \oplus n\left( \alpha_{\delta(i - a)} \cup \beta_{\delta(i - b)} \cup \gamma_{\delta(i - c)} \right) \right) \\
&= \bigcup_{i = 0}^{n - 1} \left( n \left( \mathbb{Z} \setminus 6\mathbb{Z} \oplus
\left( \alpha_{\delta(i - a)} \cup \beta_{\delta(i - b)} \cup \gamma_{\delta(i - c)} \right) \right) + 6i \right).
\end{aligned}
\]
Thus, attempting to tile $S'$ with $nD$ results in $n$ independent verifications in $n$ copies of $n\mathbb{Z}$.  
From the property of blockers, the only case where $nD$ cannot tile $S'$ is when
\[
i - a = i - b = i - c = 0
\]
for some $0 \leq i \leq n - 1$; in other words,
\[
a = b = c.
\]

\begin{figure}[ht]
\centering
\includegraphics[width=1 \textwidth]{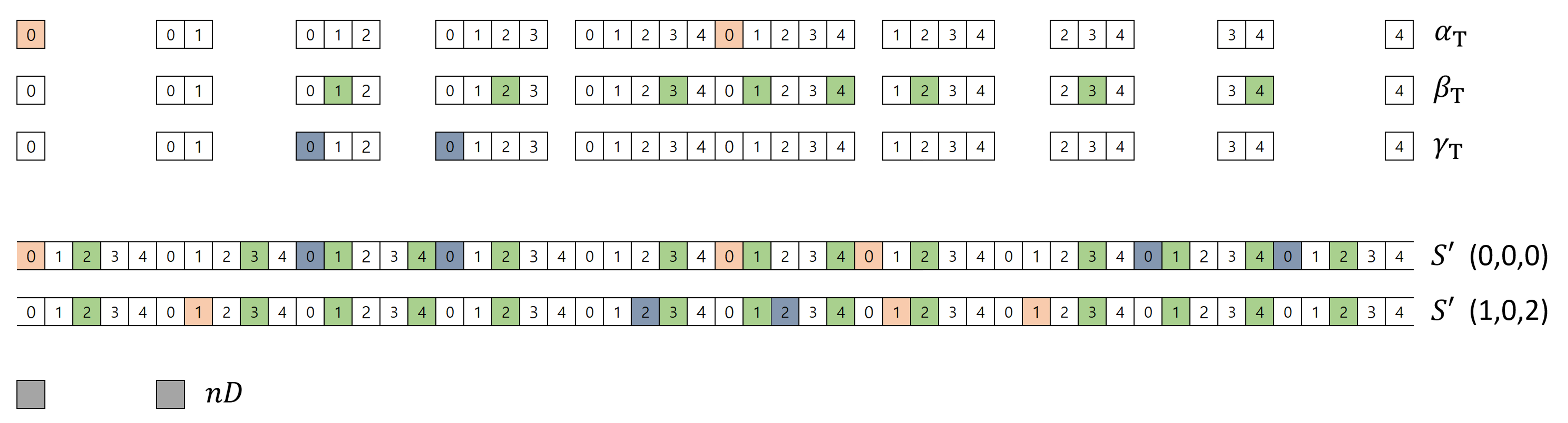}
\captionsetup{width=0.9 \textwidth}
\caption{$S'$ for cases $(a,\,b,\,c)=(0,\,0,\,0)$ and $(1,\,0,\,2)$ when $n=5$;
same numbers correspond to the same copy of $n\mathbb{Z}$.}
\label{fig:Figure11}
\end{figure}

For the general case where $a,\, b,\, c \in \mathbb{Z}$,\, $nD$ cannot tile $S'$ if and only if
\[
a \equiv b \equiv c \pmod{n}.
\]

This is because $S'$ is identical under $a$, $b$, and $c$ being altered by a multiple of $n$.
With this property of towers, we have the ability to forbid $n$ triples of positions in a cyclic manner.
Using Figure 6 as a base structure, we will insert towers into the cuboids to forbid every triple which is not allowed in the given cyclic triomino set.

\subsection{The filler and the brick} %chapter 4.2

First, we define a 3-dimensional version of the towers.
In the process, we modify the towers so that the polycube used for filling in gaps cannot tile $\mathbb{Z}^3$ by itself.
Let $O$ be the following polyomino:
\[
\{-1,\,0,\,1\}^2 \setminus \{(0,\,0)\}
\]

The \textbf{filler} is defined as
\[
O \times \{0,\,n\},
\]
and it tiles the leftover space from
\[
O \times \alpha_T, \quad O \times \beta_T, \quad O \times \gamma_T.
\]

\begin{lemma} %lemma4.2
The filler cannot tile $E \subset \mathbb{Z}^3$ if $E$ contains any translation of $\{-1,\,0,\,1\}^2 \times \{0\}$ as a subset.
\end{lemma}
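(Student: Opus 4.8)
The plan is to reduce the claim to a purely two-dimensional statement about the single polyomino $O=\{-1,0,1\}^2\setminus\{(0,0)\}$: that $O$ cannot tile any subset of $\mathbb Z^2$ containing a full $3\times 3$ square $\{-1,0,1\}^2$, even allowing overhang outside that square. Granting this, I would derive the three-dimensional statement by slicing a hypothetical tiling of $E$ along horizontal planes.

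For the reduction, suppose the filler $O\times\{0,n\}$ tiles $E$ and that $w+\bigl(\{-1,0,1\}^2\times\{0\}\bigr)\subseteq E$ for some $w\in\mathbb Z^3$, and let $c$ be the third coordinate of $w$. A translated copy $(p,q,r)+\bigl(O\times\{0,n\}\bigr)$ of the filler occupies exactly the two horizontal planes $z=r$ and $z=r+n$; since $n\geq 1$, at most one of these is the plane $z=c$, so the copy meets that plane either in the empty set or in $\bigl(O+(p,q)\bigr)\times\{c\}$, a translate of $O$. As the filler copies in the tiling are pairwise disjoint and cover $E$, their intersections with the plane $z=c$ are pairwise disjoint and cover $E\cap(\mathbb Z^2\times\{c\})$. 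Identifying this plane with $\mathbb Z^2$, the planar set $E_c=\{(x,y):(x,y,c)\in E\}$ is therefore tiled by translates of $O$ while containing a translate of $\{-1,0,1\}^2$ — contradicting the two-dimensional statement.

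To prove that statement, note that every translate of $O$ equals $S\setminus\{h\}$ for a $3\times 3$ square $S$ with centre $h$. Let $T=p+\{-1,0,1\}^2\subseteq E_c$ be the $3\times 3$ square with centre $p$, and let $R_1=S_1\setminus\{h_1\}$ be the tile covering $p$. From $p\in S_1$ and $p\neq h_1$ we get $h_1-p\in\{-1,0,1\}^2\setminus\{(0,0)\}$, so $h_1\in T\subseteq E_c$, and hence $h_1$ is covered by some tile $R_2=S_2\setminus\{h_2\}$ with $R_2\neq R_1$; similarly $h_2-h_1\in\{-1,0,1\}^2\setminus\{(0,0)\}$. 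Now $S_1$ and $S_2$ are $3\times 3$ squares whose centres differ by a vector in $\{-1,0,1\}^2\setminus\{(0,0)\}$, so $|S_1\cap S_2|\geq 4$, and moreover $\{h_1,h_2\}\subseteq S_1\cap S_2$. Consequently $R_1\cap R_2=(S_1\cap S_2)\setminus\{h_1,h_2\}$ is non-empty, contradicting the disjointness of $R_1$ and $R_2$ in the tiling.

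The only substantive content is the planar statement, and the step I expect to require the most care is the slicing reduction: namely, confirming that a horizontal plane meets at most one of the two levels of any filler copy, so each nonempty cross-section is a genuine translate of $O$ rather than a union of two, and that disjointness and coverage transfer cleanly from the three-dimensional tiling to the induced two-dimensional one. Once this is pinned down, the planar argument is short — the estimate $|S_1\cap S_2|\geq 4$ merely records that two axis-aligned $3\times 3$ squares with centres a king-move apart overlap in a $3\times 2$ or a $2\times 2$ block.
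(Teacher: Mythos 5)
Your proposal is correct and follows essentially the same approach as the paper: the tile covering the centre of the $3\times 3$ square has its hole inside that square, and the hole cannot be covered because any other copy of $O$ containing it would overlap the first. Your write-up just makes explicit two steps the paper leaves implicit — the slicing of the filler $O\times\{0,n\}$ into planar copies of $O$, and the overlap computation showing the hole is uncoverable — which is a fair elaboration rather than a different argument.
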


\begin{proof}
Suppose that the filler can tile $E$ and let $(\{-1,\,0,\,1\}^2 \times \{0\}+v) \subset E$.
If a filler covers the coordinate $v$, the hole of this filler is positioned inside $\{-1,\,0,\,1\}^2 \times \{0\}+v$.
This hole cannot be covered, leading to a contradiction.
\end{proof}

The filler is one of the two polycubes used to encode the cyclic triomino problem(We actually scale up the filler by a factor of $3$,
but this is irrelevant until the end).
The other one is the brick, which we will explain now.

Let $\cS=(\cV,\,\cS_1,\,\cS_2,\,\cS_3,\,\cS_4)$ be a cyclic triomino set which satisfies $\cV=\mathbb{Z}_n$ and $\gcd(n,\,6)=1$.
This extra requirement does not affect the undecidability proof in Section 3, since the proof works as long as $n$ is unbounded.

For $1 \leq i \leq 4$, let $\overline{K_i}$ be the representatives of $\mathbb{Z}_n^3 \setminus \cS_i$ such that
\[
\overline{K_i} \oplus \{(k,\,k,\,k)\mid k\in \mathbb{Z}_n\}
\]
is non-overlapping and equals $\mathbb{Z}_n^3 \setminus \cS_i$.\, Then we need
\[
m = \left|\overline{K_1}\right| + \left|\overline{K_2}\right| + \left|\overline{K_3}\right| + \left|\overline{K_4}\right|
\]
towers to forbid every triple not in $\cS_i$.

The \textbf{empty brick} is a $(3m+2)\times 5 \times 6n$ cuboid with the inside carved.
It is defined as
\[
B_0 = I_{0,\,3m+1} \times I_{0,\,4} \times I_{0,\,6n-1} \setminus
\bigcup_{i=0}^{m-1} \left( O \times I_{0,\,6n-1} + (3i+2,\,2,\,0) \right)
\]
where $I_{a,\,b} = \{x\mid x\in \mathbb{Z},\, a\leq x\leq b\}$.
The empty brick has $m$ ‘poles’, which will fit $m$ towers(Figure 12).
To make the discussion more concise, we assume that any tiling by the empty brick or its variation has a period of $6n$ in the $z$ direction, and can only be translated by
\[
\{ ((3m+2)x, \, 5y, \, 6z) \mid x,\,y,\,z\in \mathbb{Z} \}.
\]
Considering the side lengths of the empty brick,
these two conditions make the tiling structure of the empty brick equivalent to the tiling structure of a $1\times 1\times n$ cuboid(Figure 6).
These conditions will be enforced later.

\begin{figure}[ht]
\centering
\includegraphics[width=0.22 \textwidth]{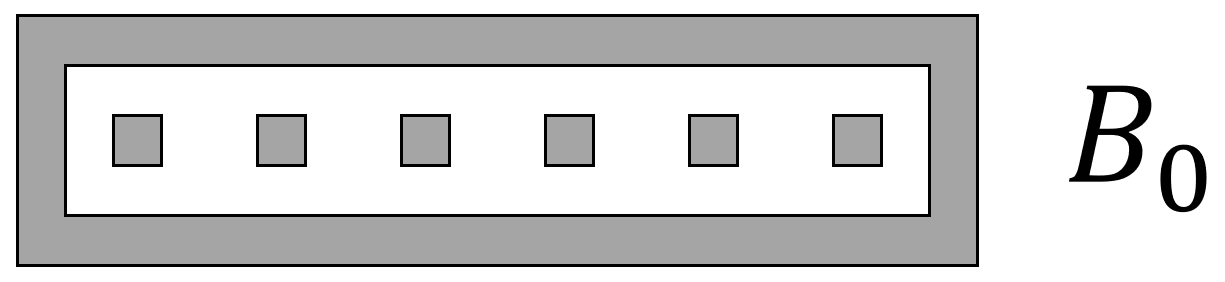}
\captionsetup{width=1 \textwidth}
\caption{A cross section of the empty brick along the $xy$ plane when $m=6$; the cross section is the same for every $0 \leq z \leq 6n-1$.}
\label{fig:Figure12}
\end{figure}

For a given triple $(a,\,b,\,c)\in \overline{K_1}$, consider the empty brick with three towers added:
\[
\begin{aligned}
B_1 = B_0 &\cup \left( O \times \alpha_T + (2,\,2,\,-6a) \right) \\
&\cup \left( O \times \beta_T + (-3m,\,2,\,-6b) \right) \\
&\cup \left( O \times \gamma_T + (2,\,-3,\,-6c) \right)
\end{aligned}
\]

Note that two towers are outside of the empty brick(Figure 13).
These towers allow the polycube $B_1$ to interact with adjacent $B_1$'s on the $+x$ and $+y$ directions.
Specifically, when three $B_1$'s are placed at $(0,\,0,\,6i)$, $(3m+2,\,0,\,6j)$ and $(0,\,5,\,6k)$,
towers $\alpha_T$, $\beta_T$ and $\gamma_T$ meet with a displacement of $6i-6a$, $6j-6b$ and $6k-6c$ respectively.
From the result in Section 4.1, this implies that the filler cannot tile the leftover space if and only if
\[
i-a \equiv j-b \equiv k-c \pmod{n}.
\]
Thus, the above placements of towers forbid three bricks from taking positions $(a,\, b,\, c)$ from
$\overline{K_1}$(and $n$ triples of positions from $\mathbb{Z}_n^{3}\setminus \cS_1$).

\begin{figure}[ht]
\centering
\includegraphics[width=0.8 \textwidth]{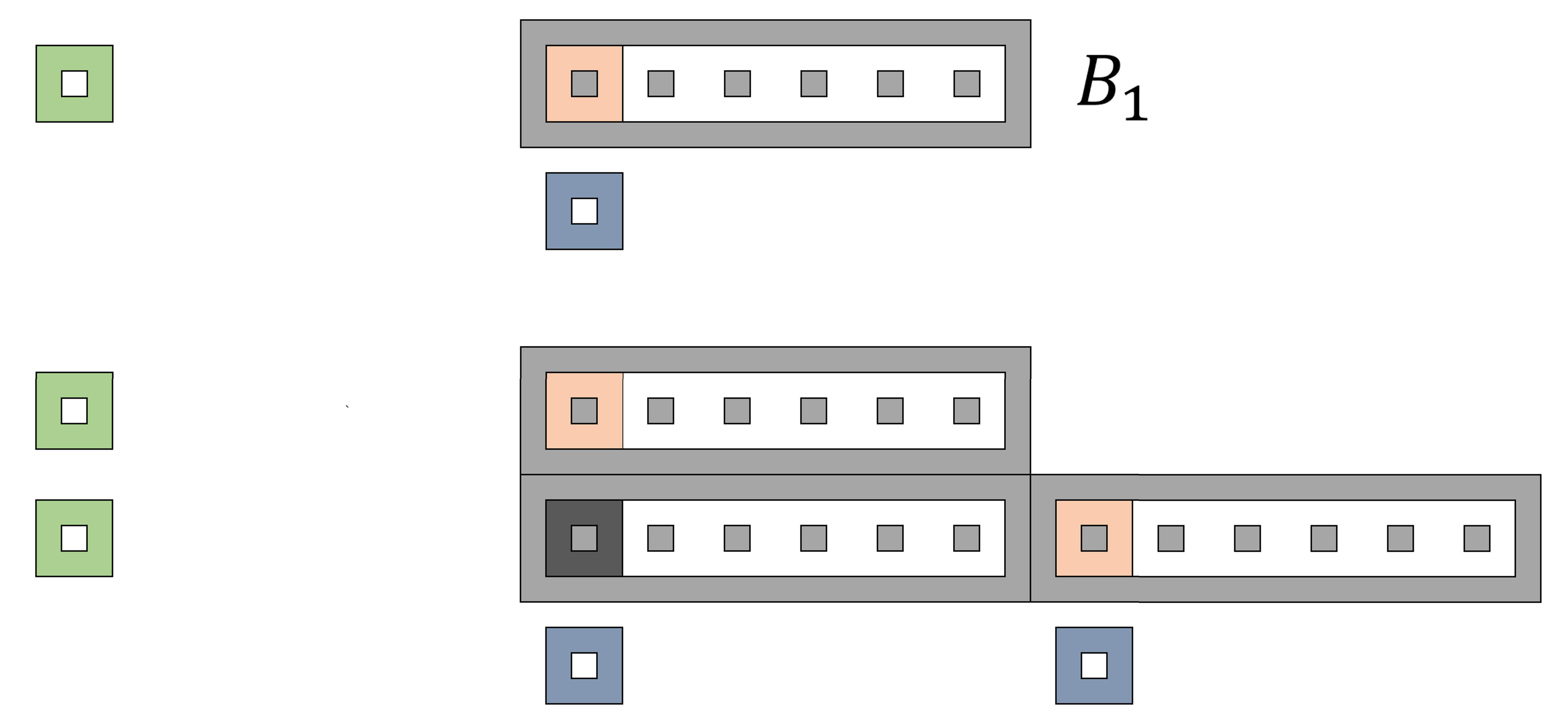}
\captionsetup{width=0.55 \textwidth}
\caption{A projection of $B_1$  on plane $xy$, and the interaction between three $B_1$’s}
\label{fig:Figure13}
\end{figure}

Similarly, we can forbid every triple of positions from $\overline{K_i}$ with $m$ sets of three towers.
Define $T_{0,\, i,\, j}$ to $T_{5,\, i,\, j}$ as
\[
\begin{aligned}
T_{0,\, i,\, j} &= O \times \alpha_T + (3i-1,\, 2,\, -6j),\\
T_{1,\, i,\, j} = T_{5,\, i,\, j} &= O \times \beta_T + (3i-3m-3,\, 2,\, -6j),\\
T_{2,\, i,\, j} &= O \times \gamma_T + (3i-1,\, -3,\, -6j),\\
T_{3,\, i,\, j} &= O \times \beta_T + (3i+3m+1,\, 2,\, -6j),\\
T_{4,\, i,\, j} &= O \times \gamma_T + (3i-1,\, 7,\, -6j).
\end{aligned}
\]

Let $(a_1,\, b_1,\, c_1),\, \dots,\, (a_m,\, b_m,\, c_m)$ be all the elements of
$\overline{K_1}$, $\overline{K_2}$, $\overline{K_3}$, and $\overline{K_4}$ in order and let $(a_i,\, b_i,\, c_i)$ be from $\overline{K_{g(i)}}$.
The brick is defined as
\[
B = B_0 \cup \bigcup_{i=1}^{m}
\left(
T_{0,\, i,\, a_i} \cup T_{g(i),\, i,\, b_i} \cup T_{g(i)+1,\, i,\, c_i}
\right)
\]
(Figure 14).

\begin{figure}[ht]
\centering
\includegraphics[width=0.8 \textwidth]{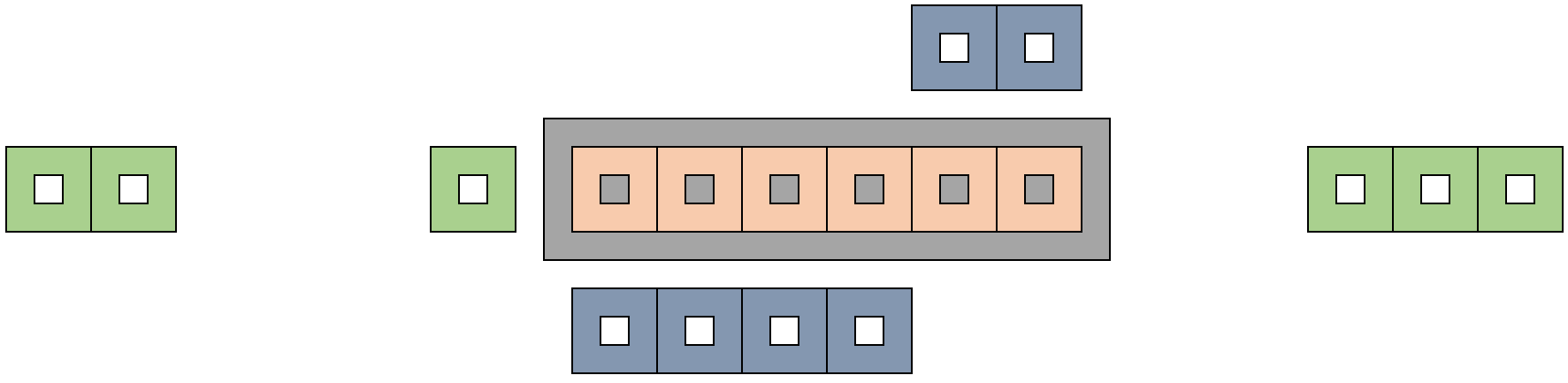}
\captionsetup{width=0.85 \textwidth}
\caption{A projection of $B$ on the $xy$ plane;  $\left|\overline{K_1}\right| = \left|\overline{K_2}\right| =2$ and $\left|\overline{K_3}\right| = \left|\overline{K_4}\right| =1$ in the figure.}
\label{fig:Figure14}
\end{figure}

\begin{lemma} %lemma 4.3
If the filler tiles $\mathbb{Z}^3 \setminus (W \oplus B)$ where
\[
W \subset \left\{ \big( (3m+2)x,\, 5y,\, 6z \big) \mid x,\, y,\, z \in \mathbb{Z} \right\},
\]
\[
W + (0,\, 0,\, 6n) = W
\]
and $W \oplus B$ is non-overlapping, then $S$-cyclic triomino problem is solvable.
\end{lemma}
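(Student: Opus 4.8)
The plan is to show that a tiling configuration of the type described in the hypothesis — copies of the brick $B$ placed on the sublattice generated by $(3m+2,0,0)$, $(0,5,0)$, $(0,0,6)$, periodic with period $6n$ in $z$, together with fillers covering everything else — encodes a solution $\cT$ to the $\cS$-cyclic triomino problem. The key dictionary is: for each pair $(x,y) \in \mathbb{Z}^2$ there is exactly one copy of $B$ whose ``base corner'' sits at $((3m+2)x,\,5y,\,6z)$ for a unique $z \in \{0,1,\dots,n-1\}$ (uniqueness of $z$ modulo $n$ comes from the periodicity $W + (0,0,6n) = W$, and existence-and-uniqueness of \emph{some} copy over each $(x,y)$ column must first be extracted from the fact that $W \oplus B$ together with the filler tiles all of $\mathbb{Z}^3$). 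We then define $\cT(x,y) = z$, viewing $z$ as an element of $\cV = \mathbb{Z}_n$.

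First I would establish the column structure: because $B \supseteq B_0$ is essentially a $(3m+2)\times 5 \times 6n$ cuboid with only thin carved poles, and because (by Lemma~4.2) the filler can never cover a full $\{-1,0,1\}^2\times\{0\}$ slab, the filler is confined to the pole-shaped channels; hence the bricks alone must account for all of $\mathbb{Z}^3$ outside those channels, which forces exactly one brick per $(3m+2)\times 5$ horizontal cell, i.e. one brick per $(x,y)$, with a well-defined $z$-offset. This is the step I expect to be the main obstacle: one has to argue carefully that the carved-out ``$O$-cross'' regions are genuinely the only place the fillers live and that no filler straddles two bricks or pokes into a neighboring column in a way that would break the one-brick-per-column count — this is where the precise side lengths $3m+2$, $5$, $6n$ and the placement of the poles at $(3i+2,2,0)$ matter, and where the periodicity and translation-lattice hypotheses on $W$ are used.

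Next, having the function $\cT$, I would verify it solves the cyclic triomino problem. Fix $s = (x,y)$ and an orientation $i \in \{1,2,3,4\}$; we must check $(\cT(s),\,\cT(s+u_i),\,\cT(s+u_{i+1})) \in \cS_i$. Consider the three bricks over the columns $s$, $s+u_i$, $s+u_{i+1}$; by construction of $B$ (the tower families $T_{0,i,a_i}$, $T_{g(i),i,b_i}$, $T_{g(i)+1,i,c_i}$ encoding each forbidden representative $(a_i,b_i,c_i) \in \overline{K_{g(i)}}$), the towers $O\times\alpha_T$, $O\times\beta_T$, $O\times\gamma_T$ belonging to the $\ell$-th pole of these three bricks meet with relative $z$-displacements $6\cT(s)-6a_\ell$, $6\cT(s+u_i)-6b_\ell$, $6\cT(s+u_{i+1})-6c_\ell$ (after matching orientations to the $+x$ and $+y$ neighbor directions). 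By the verification analysis of Section~4.1, the filler fails to fill the leftover space in that pole precisely when $\cT(s)-a_\ell \equiv \cT(s+u_i)-b_\ell \equiv \cT(s+u_{i+1})-c_\ell \pmod n$, i.e. when $(\cT(s),\cT(s+u_i),\cT(s+u_{i+1}))$ is the $\mathbb{Z}_n$-shift of $(a_\ell,b_\ell,c_\ell)$ and hence lies in $\mathbb{Z}_n^3\setminus\cS_i$. Since by hypothesis the filler \emph{does} tile all of $\mathbb{Z}^3\setminus(W\oplus B)$, no such coincidence occurs for any pole, so the triple avoids every representative in $\overline{K_i}$ and therefore lies in $\cS_i$. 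As this holds for all $s$ and all $i$, $\cT$ is a solution, completing the proof.
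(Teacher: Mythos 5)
Your proposal is correct and follows essentially the same route as the paper: first use Lemma~4.2 to force the bricks into a gap-free column structure on the lattice $\{((3m+2)x,\,5y,\,6z)\}$, yielding a well-defined $\cT\colon\mathbb{Z}^2\to\{0,\dots,n-1\}$, and then invoke the tower-interaction analysis of Section~4.1 (the ``argument for $B_1$'' and its analogues for the other orientations) to conclude that each triple $(\cT(s),\cT(s+u_i),\cT(s+u_{i+1}))$ avoids every shifted representative of $\overline{K_i}$ and hence lies in $\cS_i$. The step you flag as the main obstacle is exactly the step the paper also treats briefly, so there is no substantive divergence.
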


\begin{proof}
From Lemma 4.2, we see that there must be no gaps between bricks. Thus
\[
W = \left\{ \big( (3m+2)x,\, 5y,\, 6nz + 6 \cT(x,\, y) \big) \mid x,\, y,\, z \in \mathbb{Z} \right\}
\]
for a function $\cT \colon \mathbb{Z}^2 \to \{0,\, 1,\, \dots,\, n-1\}$.  
Let $u_1 = (1,\, 0)$, $u_2 = (0,\, 1)$, $u_3 = (-1,\, 0)$, $u_4 = (0,\, -1)$ and $s \in \mathbb{Z}^2$.  
From the argument for $B_1$,
\[
\big( \cT(s),\, \cT(s+u_1),\, \cT(s+u_2) \big) \notin \mathbb{Z}_n^3 \setminus \cS_1
\]
and thus belongs to $\cS_1$. Similarly,
\[
\big( \cT(s),\, \cT(s+u_2),\, \cT(s+u_3) \big) \in \cS_2,
\]
\[
\big( \cT(s),\, \cT(s+u_3),\, \cT(s+u_4) \big) \in \cS_3,
\]
\[
\big( \cT(s),\, \cT(s+u_4),\, \cT(s+u_1) \big) \in \cS_4.
\]
Therefore, $\cT$ is a solution to $\cS$-cyclic triomino problem.
\end{proof}

Finally, we enforce the assumptions made throughout the section.
We scale up the filler and the brick by a factor of $3$, then add bumps and dents to the brick.
For $0 \le i \le n-1$, the coordinates
\[
(-1,\, 1,\, 1+18i),\, (1,\, -1,\, 1+18i),\, (1,\, 1,\, -1)
\]
are added, and the coordinates
\[
(9m+5, \, 1,\, 1+18i),\, (1,\, 14,\, 1+18i),\, (1,\, 1,\, 18n-1)
\]
are removed(Figure 15).
We call these new polycubes the \textbf{3-filler} and the \textbf{3-brick}.

\begin{figure}[ht]
\centering
\includegraphics[width=0.8 \textwidth]{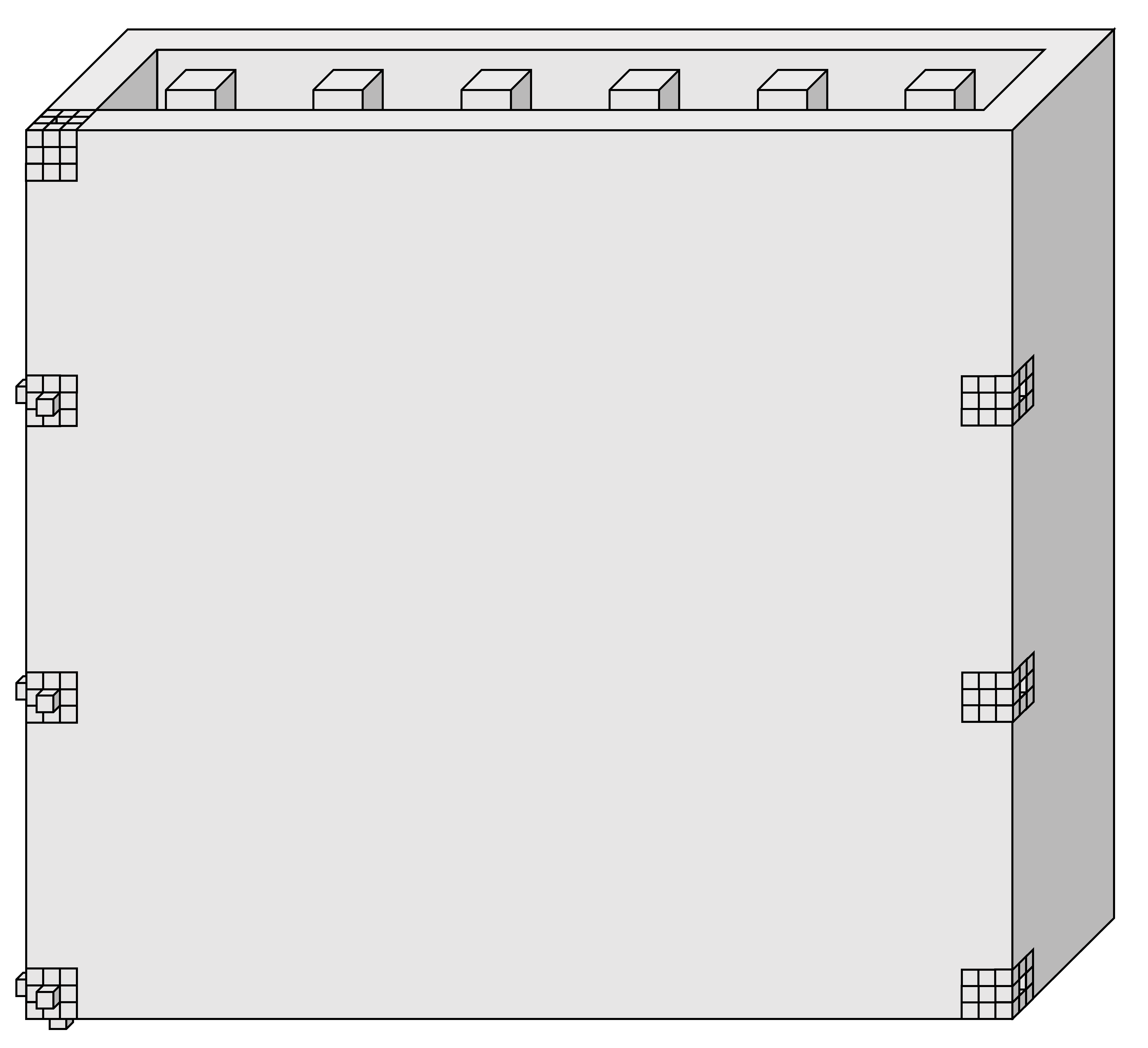}
\captionsetup{width=1 \textwidth}
\caption{The $3$-brick; the figure uses $n=3$ and $m=6$, but the actual $3$-brick would have to satisfy $\gcd(6, \,n)=1$. The towers have been ommited.}
\label{fig:Figure15}
\end{figure}

\begin{lemma} %[4.4]
The 3-filler and the 3-brick tile $\mathbb{Z}^3$ if and only if $\cS$-cyclic triomino problem is solvable.
\end{lemma}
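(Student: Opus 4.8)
The plan is to reduce the statement to Lemma 4.3 by showing that the bumps and dents added in the final paragraph exactly enforce the three structural assumptions that have been used freely throughout Section 4: that any tiling by the 3-brick is periodic with period $18n$ in the $z$ direction, that all 3-bricks lie on the lattice $\{(9m+6)x,\ 15y,\ 18z)\}$, and that the 3-filler only fills the leftover space inside the tower channels (Lemma 4.2). First I would argue the $z$-periodicity and lattice alignment: the three dents at $(9m+5,\,1,\,1+18i)$, $(1,\,14,\,1+18i)$, $(1,\,1,\,18n-1)$ and the matching bumps at $(-1,\,1,\,1+18i)$, $(1,\,-1,\,1+18i)$, $(1,\,1,\,-1)$ are arranged so that a bump of one 3-brick can only be accommodated by the corresponding dent of a neighboring 3-brick (no filler can fill a $1\times1\times1$ notch adjacent to a solid face, and the 3-brick's own geometry leaves no other room). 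Matching the bump/dent in the $x$ direction forces a neighbor displaced by exactly $(9m+6,\,0,\,*)$; in the $y$ direction by $(0,\,15,\,*)$; and the $z$-bump/dent pair, together with the fact that the bumps repeat with period $18$ in $z$ but the top dent sits at $18n-1$, forces the $z$-displacement between stacked 3-bricks to be a multiple of $18$ and the whole column to repeat with period $18n$. This yields exactly the hypotheses on $W$ in Lemma 4.3 (after dividing out the global scaling factor of $3$).

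Next I would handle the converse direction, which is the easy half: given a solution $\cT$ to the $\cS$-cyclic triomino problem, place 3-bricks at $W = \{((9m+6)x,\ 15y,\ 18nz + 18\cT(x,y))\}$ and check directly that the bumps and dents interlock (consecutive layers differ by a multiple of $18$, so the $z$-notches align; the $x$- and $y$-notches align because every column uses the same $x,y$ offsets), that distinct 3-bricks do not overlap (the carved interior of one accommodates the towers of its neighbors precisely because the tower displacements $6i-6a$ etc. avoid the forbidden congruence, by the Section 4.1 computation), and that the 3-filler tiles the complement (the leftover space in each tower channel is, after scaling, a union of independent copies of the $1$-dimensional verification problem of Section 4.1, each of which is solvable since we are not in the forbidden case). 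Here I would lean on the displayed Minkowski-sum computation preceding Figure 11 and on Lemma 4.3's reasoning run in reverse.

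For the forward direction I would then invoke Lemma 4.2 to conclude that the 3-filler cannot cover any coordinate where the 3-brick's solid $\{-1,0,1\}^2\times\{0\}$-type block would otherwise have to be, forcing the 3-bricks to leave no gaps in any column (each column of 3-bricks is flush), so that $W$ has the form required by Lemma 4.3; that lemma then produces a solution $\cT$ to the cyclic triomino problem. The main obstacle I anticipate is the first step: carefully verifying that the three pairs of unit bumps and dents genuinely pin down the full translation lattice and the $z$-period, rather than merely constraining one coordinate. In particular one must rule out "staggered" configurations where 3-bricks from different columns interlock their notches with each other in an unintended way, or where a bump is left uncovered because no dent is available — this requires checking that the scaled-up 3-brick geometry, with the $O$-shaped tower cross-sections, leaves no alternative home for a $1\times1\times1$ protrusion except the designated dent, and that the $z$-offsets $1+18i$ of the side bumps together with the single top dent at $18n-1$ are rigid enough to force period exactly $18n$ (not a proper divisor). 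Once that rigidity is established, everything else is the bookkeeping already done in Lemmas 4.2 and 4.3 and in Section 4.1.
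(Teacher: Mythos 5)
Your proposal follows essentially the same route as the paper's proof: the forward direction combines Lemma 4.2 with the claim that the bump/dent pairs force the 3-bricks into an aligned vertical array with the required lattice and $z$-period, reducing to Lemma 4.3, and the converse is a direct verification of the interlocking configuration. The rigidity point you flag as the main obstacle is exactly the step the paper also treats briefly, so your argument matches the paper's in both structure and level of detail.
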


\begin{proof}
Suppose that the 3-filler and the 3-brick tile $\mathbb{Z}^3$.
The 3-filler cannot tile $\mathbb{Z}^3$ by Lemma 4.2, so the 3-brick must be used.  
The dents and bumps on the top and bottom of the 3-brick force the 3-bricks to form an infinite vertical array, with a period of $6n$.
The bumps and dents on the side of the arrays force the edges of these arrays to be aligned,
and force the $z$ coordinates of adjacent 3-bricks to differ by a multiple of $6$.  
This satisfies the condition of Lemma 4.3, so $\cS$-cyclic triomino problem is solvable.

Now suppose that $\cS$-cyclic triomino problem is solvable.  
It is easy to verify that a tiling by the 3-filler and the 3-brick exists in the form of the above configuration.  
Therefore, a tiling exists if and only if $\cS$-cyclic triomino problem is solvable.
\end{proof}

By Lemma 4.4, it is undecidable whether a set of two disconnected polycubes can tile $\mathbb{Z}^3$.  
Since any set of disconnected polycubes can be simulated by the same number of connected polycubes, this proves Theorem 4.1.

\section{Extension of Theorem 2.5 to higher dimensions}%chapter 5

This section generalizes the result of Theorem 2.5 to dimensions $3$ and above.

\begin{theorem} %[5.1]
Let $n \geq 3$ be a fixed integer. Given a set of $k$ $n$-dimensional tiles, some of which may be disconnected,
there is a set of $k$ connected $n$-dimensional tiles such that the set of tilings by the two sets of tiles have a one-to-one correspondence.
\end{theorem}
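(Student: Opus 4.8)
The plan is to mimic the three-dimensional construction of Section~2, replacing the single ``cube'' $C_l$ by an $n$-dimensional cube $C_l = \{0,1,\ldots,l-1\}^n$ and building an analogue $S_l^{(n)}$ of the connected simulator $S_l$. The core of the argument is, as before, to exhibit a partition of a cube into connected pieces that is both internally adjacent and externally adjacent. First I would set $m = l^n - (l-2)^n$, so that $m$ counts the outermost unit cells of $C_l$, and I would produce a partition $Q_1, Q_2, \ldots, Q_m$ of a cube $C_{m+2}$ (or a cube of whatever side length the generalized $f$ requires) into $m$ connected pieces. The function $f$ should be defined by the same idea: on each ``face slab'' of the cube (the cells with exactly one coordinate equal to $0$ or to the maximum) let $f$ read off one of the remaining coordinates, and in the bulk interior let $f$ read off one distinguished coordinate; cells not covered by any case are declared free and sent to $Q_1$. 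One must check that each $Q_i$ is connected — the interior slab contributes a full hyperplane-slab which is connected, and each face contribution is a connected strip attached to it — exactly as in Figure~2.

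Next I would verify the two adjacency properties by writing down explicit witness coordinates, just as in Lemmas~2.1 and 2.2. For internal adjacency, for each pair $i<j$ one picks a point whose coordinates are some fixed pattern giving value $i$ and a neighbor differing by a unit vector giving value $j$ (the $3$-dimensional proof used $(i,0,j)\in Q_i$, $(i,1,j)\in Q_j$; in $n$ dimensions one pads with constant coordinates). For external adjacency one must handle all $2n$ axis-parallel unit vectors $v$: for each $v$ and each pair $i<j$, produce $a\in Q_i$ and $b\in Q_j$ with $a+(\text{side}-1)v=b$, so that $Q_i + (\text{side})v$ is adjacent to $Q_j$. This is the step most prone to bookkeeping errors, because one needs the generalized $f$ to be rich enough that every coordinate direction has a face slab ``reading'' a usable coordinate; I expect this to be the main obstacle, and the right way to manage it is to make the definition of $f$ symmetric enough under permuting and reflecting coordinates that the $n=3$ witness list lifts mechanically.

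With the partition in hand I would then repeat the modification of Section~2.1 verbatim: replace each $Q_i$ by $3Q_i \oplus C_3$ to get a partition of $C_{3m+6}$, and add three bumps and three dents of size $1\times\cdots\times1$ to $Q_1'$ only (using, say, the coordinates $e_1, e_2, e_3$ rescaled — or any three coordinate directions), so that any tiling by the resulting simulator is forced onto the lattice $(3m+6)\mathbb{Z}^n$. Defining $S_l^{(n)} = \bigcup_{i=1}^m (Q_i' + (3m+6)x_i)$ where $x_1,\ldots,x_m$ enumerate $C_l \setminus (C_{l-2}+(1,\ldots,1))$, the analogues of Lemmas~2.3 and 2.4 go through with the same proofs: external adjacency gives connectedness, and the bump/dent pattern plus the Minkowski-sum computation gives that $S_l^{(n)}$ tiles $\mathbb{Z}^n$ only by $(3m+6)\mathbb{Z}^n$. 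Finally, given tiles $P_1,\ldots,P_k$ in $\mathbb{Z}^n$, choose $l$ so each fits in $C_l$, set $P_i' = (3m+6)P_i \oplus S_l^{(n)}$; internal adjacency makes each $P_i'$ connected, and Lemma~2.4's analogue shows tilings by $\{P_i'\}$ correspond bijectively to tilings of $(3m+6)\mathbb{Z}^n$ by $\{(3m+6)P_i\}$, hence to tilings of $\mathbb{Z}^n$ by $\{P_i\}$. The only genuinely new content over Section~2 is the construction and verification of the $n$-dimensional internally-and-externally adjacent partition; everything downstream is formally identical.
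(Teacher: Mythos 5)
Your high-level plan matches the paper's (generalize the Section~2 machinery: an internally and externally adjacent partition of a cube into $m$ connected pieces, rescale by $3$, add bumps and dents, form $S_{n,l}$, then repeat Lemmas~2.3--2.5). However, the one genuinely new ingredient the theorem requires --- an explicit $n$-dimensional partition together with proofs of connectivity, internal adjacency and external adjacency --- is exactly what you leave open. You say to define $f$ ``by the same idea'' (faces read off some remaining coordinate, the bulk reads a distinguished one) and you yourself flag the verification of external adjacency in all $2n$ directions as ``the main obstacle,'' hoping a sufficiently symmetric definition makes the witness list lift mechanically. That is a hope, not an argument: in the $3$-dimensional $f$ the specific, asymmetric choices (the face $z=0$ reads $y$ while $z=m+1$ reads $x$, etc.) are precisely what produce the $\pm z$ witnesses $(j,i,0)\in Q_i$, $(j,i,m+1)\in Q_j$, and a naive symmetric lift does not obviously reproduce this in every axis direction. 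The paper instead defines $f_n$ recursively --- stack $m+2$ copies of $f_{n-1}$ along the new axis and rotate the top copy by $-\pi/2$ in the $x_1x_2$-plane --- and then proves connectivity and external adjacency by induction (with internal adjacency inherited because each $Q_{n,i}$ contains the $3$-dimensional $Q_{3,i}$). Without some concrete construction at this step, your proof of Theorem~5.1 is incomplete.

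There is also a concrete error in the one place you do commit to details: you propose adding only \emph{three} bumps and three dents to $Q_1'$, in three chosen coordinate directions. For $n\ge 4$ this does not force tilings of $S_{n,l}$ onto the lattice $(3m+6)\mathbb{Z}^n$: the bump/dent matching only rigidifies relative displacements along the three decorated axes, so slabs of tiles perpendicular to an undecorated axis can be sheared against one another (just as unmarked cubes tile in shifted columns), the analogue of Lemma~2.4 fails, and with it the claimed one-to-one correspondence between tilings. You need $n$ bumps and $n$ dents, one for each coordinate direction, which is what the paper adds (the coordinates $(-1,1,\dots,1),\dots,(1,\dots,1,-1)$ added and $(3m+5,1,\dots,1),\dots,(1,\dots,1,3m+5)$ removed). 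This particular flaw is easy to repair, but the missing partition construction is the substance of the theorem.
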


Let $m$ be a positive integer.
We will partition $\{0,\,1,\,\dots,\,m+1\}^n$, which is the $n$-dimensional cube with side length $m+2$.
We use the $3$-ary function $f$ defined in Section 2 to define the $n$-ary function $f_n$.
Let
\[
f_3(x_1,\,x_2,\,x_3) = f(x_1,\,x_2,\,x_3)
\]
and for $n \geq 4$:
\[
f_n(x_1,\,x_2,\,x_3,\,\dots,\,x_n) =
\begin{cases}
f_{n-1}(x_1,\,x_2,\,x_3,\,\dots,\,x_{n-1}) & (0 \leq x_n \leq m) \\
f_{n-1}(m+1-x_2,\,x_1,\,x_3,\,\dots,\,x_{n-1}) & (x_n = m+1)
\end{cases}
\]

$f_n$ can be thought of as taking $m+2$ copies of $f_{n-1}$ and rotating the top(`top' here means having the largest $n$-th dimensional coordinate)
copy by $-\frac{\pi}{2}$ in the $xy$-plane.

Let $Q_{n,\,1},\, Q_{n,\,2},\,\dots,\, Q_{n,\,m}$ be a partition of $\{0,\,1,\,\dots,\,m+1\}^n$ such that:
\[
(x_1,\,x_2,\,\dots,\,x_n) \in Q_{n,\,i} \quad \text{if and only if} \quad f_n(x_1,\,x_2,\,\dots,\,x_n) = i
\]

\begin{lemma} %[5.2]
$Q_{n,\,1},\, Q_{n,\,2},\,\dots,\,Q_{n,\,m}$ are all connected tiles.
\end{lemma}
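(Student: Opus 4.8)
The plan is to prove connectedness of each $Q_{n,i}$ by induction on $n$, using the base case $n=3$ (which was already verified in Section 2 — the sets $Q_1,\dots,Q_m$ are connected) together with the recursive structure of $f_n$. The key observation is that $f_n$ is built from $m+2$ copies of $f_{n-1}$, stacked along the $x_n$-axis: for $0\le x_n\le m$ the slice at height $x_n$ is an exact copy of the $(n-1)$-dimensional partition, and the top slice $x_n=m+1$ is a rotated copy. So $Q_{n,i}$ decomposes as a union of $m+2$ slices, $Q_{n,i}=\bigcup_{h=0}^{m+1}\big(Q_{n-1,i}^{(h)}\times\{h\}\big)$, where $Q_{n-1,i}^{(h)}$ is either $Q_{n-1,i}$ (for $0\le h\le m$) or its image under the map $(x_1,x_2,x_3,\dots,x_{n-1})\mapsto(m+1-x_2,x_1,x_3,\dots,x_{n-1})$ (for $h=m+1$).

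First I would argue that each individual slice $Q_{n-1,i}^{(h)}\times\{h\}$ is connected: for $0\le h\le m$ this is immediate from the inductive hypothesis, and for $h=m+1$ it is the image of $Q_{n-1,i}$ under an isometry of $\mathbb{Z}^{n-1}$ (a rotation by $-\tfrac{\pi}{2}$ in the first two coordinates), which preserves adjacency and hence connectedness. Second, I would show that consecutive slices are ``glued'' together, i.e.\ that $Q_{n-1,i}^{(h)}\times\{h\}$ and $Q_{n-1,i}^{(h+1)}\times\{h+1\}$ are adjacent for each $0\le h\le m$. For $0\le h\le m-1$ both slices equal $Q_{n-1,i}\times\{h\}$ and $Q_{n-1,i}\times\{h+1\}$, so any $p\in Q_{n-1,i}$ gives adjacent points $(p,h)$ and $(p,h+1)$ differing by the unit vector $e_n$ — provided $Q_{n-1,i}\ne\varnothing$, which holds since $1\le i\le m$ and $f_{n-1}$ attains the value $i$ (e.g.\ as in the explicit coordinates used in the Section~2 lemmas). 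The only delicate junction is between $h=m$ and $h=m+1$, where the rotation kicks in: I need a point $p\in Q_{n-1,i}$ such that the rotated point also lies in $Q_{n-1,i}$, equivalently a point fixed (in the relevant sense) by the rotation. The natural candidate is a coordinate of the form $(i,i,j,\dots)$ or similar lying on the rotation axis $x_1=x_2$ appropriately shifted; concretely one checks that a point with $x_1=x_2$ chosen so that $f_{n-1}$ returns $i$ is mapped by $(x_1,x_2,\dots)\mapsto(m+1-x_2,x_1,\dots)$ to another point still evaluating to $i$ under $f_{n-1}$, so $(p,m)$ and $(q,m+1)$ are adjacent via $e_n$.

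Once every slice is connected and every pair of consecutive slices is adjacent, $Q_{n,i}$ is connected as a union of a ``chain'' of connected pieces. The main obstacle I anticipate is the $h=m\leftrightarrow h=m+1$ junction: I must exhibit an explicit lattice point that lies in $Q_{n-1,i}$ and whose rotated image also lies in $Q_{n-1,i}$, and verify this using the definition of $f_{n-1}$ (ultimately bottoming out in the five-case definition of $f$). This requires tracking how the rotation $(x_1,x_2,x_3,\dots,x_{n-1})\mapsto(m+1-x_2,x_1,x_3,\dots,x_{n-1})$ interacts with the recursion, and checking that the witness point is not a free coordinate. Everything else is routine bookkeeping: the induction hypothesis handles the interior slices, isometry-invariance handles the top slice, and non-emptiness of $Q_{n-1,i}$ follows by exhibiting any witness to $f_{n-1}=i$ as was done in dimension $3$.
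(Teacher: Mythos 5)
Your proposal is correct and follows essentially the same route as the paper: induction on $n$, slice-by-slice connectedness, and an explicit witness joining the rotated top slice $x_n=m+1$ to the slice $x_n=m$. The witness the paper uses is exactly the candidate you anticipate, namely $(i,\,i,\,0,\,\dots,\,0,\,m)$ and $(i,\,i,\,0,\,\dots,\,0,\,m+1)$, whose membership in $Q_{n,\,i}$ reduces through the recursion to $f(i,\,i,\,0)=i$ and $f(m+1-i,\,i,\,0)=i$ (the first case of the definition of $f$), so the check you left pending is a one-line computation.
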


\begin{proof}
We will use induction on $n$.
We know that the statement is true for $n=3$ from Section 2.
Suppose that $Q_{n-1,\,1},\,Q_{n-1,\,2},\,\dots,\,Q_{n-1,\,m}$ are all connected tiles.
Let $(x_1,\,x_2,\,\dots,\,x_n)$ represent the $n$-dimensional coordinate.
Since the tiles are obviously connected for $0 \leq x_n \leq m$, we only have to show that the tiles are connected between $x_n = m$ and $x_n = m+1$.
For $1 \leq i \leq m$, we have
\[
(i,\,i,\,0,\,0,\,\dots,\,0,\,m) \in Q_{n,\,i}
\]
\[
(i,\,i,\,0,\,0,\,\dots,\,0,\,m+1) \in Q_{n,\,i}
\]
so the $i$-th tile is connected between $x_n = m$ and $x_n = m+1$.
\end{proof}

\begin{lemma} %[5.3]
The partition $Q_{n,\,1},\,Q_{n,\,2},\,\dots,\,Q_{n,\,m}$ is internally adjacent.
\end{lemma}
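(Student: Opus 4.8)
The plan is to mimic the proof of Lemma~2.1 (the $n=3$ case) and then push it through the inductive definition of $f_n$. For the base case $n=3$, Lemma~2.1 already gives that $Q_{3,i}$ and $Q_{3,j}$ are adjacent for every $1 \le i < j \le m$, via the explicit pair $(i,0,j) \in Q_i$ and $(i,1,j) \in Q_j$. So I would proceed by induction on $n \ge 3$, assuming the partition $Q_{n-1,1},\dots,Q_{n-1,m}$ is internally adjacent.

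First I would observe that the bottom slab $0 \le x_n \le m$ of $\{0,\dots,m+1\}^n$ is, for each fixed value of $x_n$ in that range, a copy of $\{0,\dots,m+1\}^{n-1}$ on which $f_n$ agrees with $f_{n-1}$. Hence if $a \in Q_{n-1,i}$ and $b \in Q_{n-1,j}$ are an adjacency witness in dimension $n-1$ (so $a+v=b$ for some axis-parallel unit vector $v$), then $(a,0) \in Q_{n,i}$ and $(b,0) \in Q_{n,j}$ with the same (now $n$-dimensional, last coordinate $0$) unit vector $v$ witnessing adjacency. This immediately transports internal adjacency from dimension $n-1$ to dimension $n$, completing the induction.

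The one subtlety to address — and what I expect to be the only real obstacle — is that the inductive hypothesis must be applied to the \emph{correct} lower-dimensional partition. The witnesses I want to use should be chosen inside the bottom slab where $f_n = f_{n-1}(x_1,\dots,x_{n-1})$ literally, so the rotated top layer ($x_n = m+1$, governed by $f_{n-1}(m+1-x_2,x_1,x_3,\dots,x_{n-1})$) plays no role and need not be analyzed at all. Concretely, I would just carry the base-case witnesses upward: since $(i,0,j) \in Q_{3,i}$ and $(i,1,j) \in Q_{3,j}$, padding with zeros in the extra coordinates gives $(i,0,j,0,\dots,0) \in Q_{n,i}$ and $(i,1,j,0,\dots,0) \in Q_{n,j}$ (because $f_n$ unwinds through the $0 \le x_k \le m$ branch at every step down to $f_3$), and these differ by the unit vector $(0,1,0,\dots,0)$. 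This gives an explicit, non-inductive proof as well, which I would present for cleanliness.

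\begin{proof}
For every $1 \le i < j \le m$, consider the two coordinates
\[
a = (i,\,1,\,j,\,0,\,0,\,\dots,\,0), \qquad b = (i,\,0,\,j,\,0,\,0,\,\dots,\,0)
\]
in $\{0,\,1,\,\dots,\,m+1\}^n$. Since $0 \le x_n \le m$ (indeed $x_k = 0$ for all $4 \le k \le n$) in both coordinates, applying the recursive definition of $f_n$ repeatedly reduces to $f_3 = f$, giving $f_n(a) = f(i,\,1,\,j) = j$ and $f_n(b) = f(i,\,0,\,j) = i$. Hence $a \in Q_{n,\,j}$ and $b \in Q_{n,\,i}$. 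Since $a$ and $b$ differ by the unit vector $(0,\,1,\,0,\,\dots,\,0)$, the tiles $Q_{n,\,i}$ and $Q_{n,\,j}$ are adjacent. Therefore the partition $Q_{n,\,1},\,Q_{n,\,2},\,\dots,\,Q_{n,\,m}$ is internally adjacent.
\end{proof}
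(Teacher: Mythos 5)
Your proof is correct and takes essentially the same route as the paper: the paper simply notes that each $Q_{n,\,i}$ contains the embedded copy of $Q_{3,\,i}$ in the slice $x_4=\dots=x_n=0$ (where $f_n$ unwinds to $f$) and inherits internal adjacency, while you instantiate this with the explicit witness pair $(i,\,0,\,j)$, $(i,\,1,\,j)$ from Lemma~2.1. No gap; your version just spells out the containment the paper leaves implicit.
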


\begin{proof}
The parts $Q_{n,\,1},\,Q_{n,\,2},\,\dots,\,Q_{n,\,m}$ each contain $Q_{3,\,1},\,Q_{3,\,2},\,\dots,\,Q_{3,\,m}$, which is internally adjacent.
Therefore, $Q_{n,\,1},\,Q_{n,\,2},\,\dots,\,Q_{n,\,m}$ is also internally adjacent.
\end{proof}

\begin{lemma} %[5.4]
The partition $Q_{n,\,1},\,Q_{n,\,2},\,\dots,\,Q_{n,\,m}$ is externally adjacent.
\end{lemma}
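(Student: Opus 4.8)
The plan is to prove external adjacency by induction on $n$, mirroring the structure of the three-dimensional proof (Lemma 2.2). For $n = 3$ the claim is exactly Lemma 2.2, so I would assume that $Q_{n-1,\,1},\,\dots,\,Q_{n-1,\,m}$ is externally adjacent and establish the statement for $n$. Fix $1 \leq i < j \leq m$ and a unit vector $v$ parallel to an axis. Because the partition lives inside $\{0,\,1,\,\dots,\,m+1\}^n$, it suffices (as noted in Section 2.1) to exhibit $a \in Q_{n,\,i}$ and $b \in Q_{n,\,j}$ with $a + (m+1)v = b$; equivalently, $a$ and $b$ agree in all coordinates except one, where they take the values $0$ and $m+1$.

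I would split into two cases according to whether $v$ involves the $n$-th coordinate. If $v = \pm e_n$, I need $a$ and $b$ to agree in coordinates $x_1,\,\dots,\,x_{n-1}$ with $\{x_n(a),\,x_n(b)\} = \{0,\,m+1\}$. The point with $x_n = 0$ is governed by $f_{n-1}(x_1,\,\dots,\,x_{n-1})$ while the point with $x_n = m+1$ is governed by $f_{n-1}(m+1-x_2,\,x_1,\,x_3,\,\dots,\,x_{n-1})$, so I must choose the shared spatial coordinates $(x_1,\,\dots,\,x_{n-1})$ so that the first evaluates to $i$ (resp.\ $j$) and the second to $j$ (resp.\ $i$). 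A clean choice is to work in the $Q_{3,\cdot}$ core: take $(x_1,\,x_2) = (i,\,j)$ or similar and the remaining coordinates $0$, and verify using the explicit formula for $f$ that one end lands in $Q_{3,\,i}$ and the rotated end lands in $Q_{3,\,j}$; since $f_n$ restricted to the relevant slab reduces to $f_{n-1}$ and ultimately to $f$, the membership follows. If instead $v$ is $\pm e_t$ for some $t \leq n-1$, then I fix $x_n = 0$ (so that $f_n = f_{n-1}$ on this whole slab) and invoke the inductive hypothesis: external adjacency of $Q_{n-1,\cdot}$ gives spatial coordinates $a',\,b' \in \{0,\,\dots,\,m+1\}^{n-1}$ differing only in coordinate $t$ by $m+1$ with $a' \in Q_{n-1,\,i}$, $b' \in Q_{n-1,\,j}$; appending $x_n = 0$ to both yields the required $a,\,b \in \{0,\,\dots,\,m+1\}^n$.

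The main obstacle is the $v = \pm e_n$ case, because that is where the $-\pi/2$ rotation in the $xy$-plane actively intervenes: the two faces $x_n = 0$ and $x_n = m+1$ are colored by \emph{different} functions ($f_{n-1}$ versus its pre-composition with the coordinate swap $(x_1,\,x_2) \mapsto (m+1-x_2,\,x_1)$), so I cannot simply reuse a single slab's structure. I expect the resolution to be to exhibit explicit coordinates analogous to the six lines in the proof of Lemma 2.2 — for instance, checking that a point like $(i,\,j,\,0,\,\dots,\,0,\,0)$ sits in $Q_{n,\,i}$ and $(i,\,j,\,0,\,\dots,\,0,\,m+1)$ sits in $Q_{n,\,j}$ (and a companion pair for $-e_n$), where membership at the rotated face is read off from $f_{n-1}(m+1-j,\,i,\,0,\,\dots,\,0)$, which by unwinding the recursion down to $f$ and consulting its fourth case evaluates to $j$. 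Once the $e_n$-direction pairs are pinned down by such direct substitution and the remaining directions are handled by the inductive hypothesis on the $x_n = 0$ slab, the lemma follows.
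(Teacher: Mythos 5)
Your proposal follows essentially the same route as the paper: induction on $n$, with directions $\pm e_t$, $t\le n-1$, handled by the inductive hypothesis on a slab $0\le x_n\le m$ (where $f_n=f_{n-1}$), and the directions $\pm e_n$ handled by exhibiting explicit points on the faces $x_n=0$ and $x_n=m+1$, unwinding the rotated-face formula down to $f$. The only flaw is in your illustrative computation for the $\pm e_n$ case: with $(x_1,x_2)=(i,j)$ you have $f_n(i,j,0,\dots,0,0)=f(i,j,0)=j$ (first case of $f$, output $y$), and at the rotated face $f_n(i,j,0,\dots,0,m+1)=f(m+1-j,\,i,\,0)=i$, again by the \emph{first} case of $f$ (since $z=0$ and $1\le m+1-j\le m$), not the fourth case, and the value is $i$, not $j$. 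So your proposed memberships are swapped: the pair $(i,j,0,\dots,0,m+1)\in Q_{n,\,i}$, $(i,j,0,\dots,0,0)\in Q_{n,\,j}$ witnesses $v=-e_n$, while the companion pair $(j,i,0,\dots,0,0)\in Q_{n,\,i}$, $(j,i,0,\dots,0,m+1)\in Q_{n,\,j}$ witnesses $v=+e_n$; these are exactly the points used in the paper. With that correction the argument goes through verbatim.
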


\begin{proof}
We will use induction on $n$.
We know that the statement is true for $n=3$ from Section 2.
Suppose that $Q_{n-1,\,1},\,Q_{n-1,\,2},\,\dots,\,Q_{n-1,\,m}$ is externally adjacent.
Since $Q_{n,\,1},\,Q_{n,\,2},\,\dots,\,Q_{n,\,m}$ contains $Q_{n-1,\,1},\,Q_{n-1,\,2},\,\dots,\,Q_{n-1,\,m}$,
the condition for the externally adjacent property holds for every translation within the first $n-1$ dimensions.
Thus, it is sufficient to show that for $1 \leq i < j \leq m$ and the $n$-dimensional vector ${v} = (0,\,0,\,\dots,\,0,\,\pm 1)$,
there exist $a \in Q_{n,\,i}$ and $b \in Q_{n,\,j}$ such that $a + (m+1)v = b$.
In fact:
\[
(j,\,i,\,0,\,0,\,\dots,\,0,\,0) \in Q_{n,\,i}, \quad (j,\,i,\,0,\,0,\,\dots,\,0,\,m+1) \in Q_{n,\,j}
\]
\[
(i,\,j,\,0,\,0,\,\dots,\,0,\,m+1) \in Q_{n,\,i}, \quad (i,\,j,\,0,\,0,\,\dots,\,0,\,0) \in Q_{n,\,j}
\]
Therefore, $Q_{n,\,1},\,Q_{n,\,2},\,\dots,\,Q_{n,\,m}$ is externally adjacent.
\end{proof}

Following the process in Section 2, we now scale up $Q_{n,\,1},\,Q_{n,\,2},\,\dots,\,Q_{n,\,m}$ by a factor of $3$ and add bumps and dents to the new $Q_{n,\,1}$.
The $n$ coordinates
\[
\{(-1,\,1,\,1,\,\dots,\,1),\,(1,\,-1,\,1,\,\dots,\,1),\,\dots,\,(1,\,1,\,1,\,\dots,\,-1)\}
\]
are added, and the $n$ coordinates
\[
\{(3m+5,\,1,\,1,\,\dots,\,1),\,(1,\,3m+5,\,1,\,\dots,\,1),\,\dots,\,(1,\,1,\,1,\,\dots,\,3m+5)\}
\]
are removed.
We define this new set of $n$-dimensional tiles to be $Q_{n,\,1}'$, $Q_{n,\,2}'$, $\dots$, $Q_{n,\,m}'$.

For a positive integer $l$, let $m = l^n - (l - 2)^n$ and $Q_{n,\,1}',\,Q_{n,\,2}',\,\dots,\,Q_{n,\,m}'$ be the corresponding sets from the construction above.
Let $x_{n,\,1},\,x_{n,\,2},\,\dots,\,x_{n,\,m}$ be all the elements of $\{0,\,1,\,\dots,\,l-1\}^n \setminus \{1,\,2,\,\dots,\,l-2\}^n$.
We define $S_{n,\,l}$ as:
\[
S_{n,\,l} = \bigcup_{i=1}^m \left( Q_{n,\,i}' + (3m+6) x_{n,\,i} \right)
\]

From this point, we can replicate the exact arguments of Section 2.2 with the tile $S_{n,\,l}$.
As a result, we obtain Theorem 5.1.

\section{Conclusion} %chapter6

In this paper, we prove the undecidability of the translational tiling problem in $\mathbb{Z}^3$ with a set of two connected polycubes.
In the process, we propose the idea of simulating disconnected tiles with connected tiles.
Theorem 5.1 states that in dimensions $3$ and above, tiling with disconnected tiles is fundamentally equivalent to tiling with connected tiles.
Unlike many construction techniques that require two or more tiles to interact, the result of Theorem 5.1 can be directly used for constructing a single tile.
Therefore, this is an important contribution towards proving the undecidability of the translational monotiling problem.

It is also worth noting that we do not directly encode a set of Wang tiles in proving Theorem 4.1.
The arguments for analyzing tiling possibilities could be made much simpler than other papers on this topic due to a rather straightforward encoding.
This is made possible by encoding the cyclic triomino problem, which does not require an absolute coordinate system to encode the states.

The translational tiling problem still remains open for $n \ge 3, \, k=1$ and $(n,\,k)=(2,\,2),\,(2,\,3)$.
It will be interesting to investigate these parameters for decidability or undecidability.


\begin{thebibliography}{99}
\bibitem{RB} R. Berger, {\it The undecidability of the domino problem}, Memoirs of the American Mathematical Society {\bf 66} (1966), 1--72.
\bibitem{SW} S. W. Golomb, {\it Tiling with a set of polyominoes}, Journal of Combinatorial Theory {\bf 9} (1970), 60--71.
\bibitem{RG} R. Greenfeld, T. Tao, {\it Undecidability of translational monotilings},\\ https://arxiv.org/pdf/2309.09504 
\bibitem{NO} N. Ollinger, {\it Tiling the Plane with a Fixed Number of Polyominoes}, Language and Automata Theory and Applications {\bf 5457} (2009), 638--647.
\bibitem{VS} V. Sidorenko, {\it Periodicity of one-dimensional tilings}, Lecture Notes in Computer Science {\bf 829} (1994), 103–108.
\bibitem{JS} J. Stade, {\it Two Tiling is Undecidable}, https://arxiv.org/pdf/2506.11628
\bibitem{HW} H. Wang, {\it Proving theorems by pattern recognition-II}, Bell System Technical Journal {\bf 40} (1961), no.~1, 1--41.
\bibitem{HA} H.A.G. Wijshoff, J. van Leeuwen, {\it Arbitrary versus periodic storage schemes and tessellations of the plane using one type of polyomino},
Information and Control {\bf 62} (1984), no.~1, 1--25.
\bibitem{CY} C. Yang and Z. Zhang, {\it Undecidability of Translational Tiling with Three Tiles},\\ https://arxiv.org/pdf/2412.10646

\end{thebibliography}
\end{document}